\newtheorem{lemma}{Lemma}[section]
\newtheorem{theorem}{Theorem}[section]
\def\theequation{\thesection.\arabic{equation}}
\let\Section=\section
\def\section{\setcounter{equation}{0}\Section}
\begin{document}
\title[A class of  nonlocal elliptic
problems in the half space with a hole]
{Existence of positive solution for a class of nonlocal elliptic
problems in the half space with a hole}
\author{Xing Yi}
\thanks{*College of Mathematics and Computer Science, Key Laboratory of High Performance Computing
and Stochastic Information Processing (Ministry of Education of China),
Hunan Normal University, Changsha, Hunan 410081, P. R. China
(hnyixing522@hotmail.com)}
\maketitle

\vskip 0.3in
{\bf Abstract}\quad This work concerns with the existence of solutions for the following class of nonlocal elliptic problems
\begin{eqnarray}\label{eq:0.1}
&&\left\{\begin{array}{l}
(-\Delta)^{s} u+u=|u|^{p-2} u \text { in } \Omega_{r} \\
u \geq 0 \quad \text { in }\Omega_{r} \text { and } u \neq 0 \\
u=0 \quad \mathbb{R}^{N} \backslash \Omega_{r}
\end{array}\right.,
\end{eqnarray}
involving the fractional Laplacian operator $(-\Delta)^{s},$ where $s \in(0,1), N>2 s$, $\Omega_{r}$ is the half space with a hole in $\mathbb{R}^N$  and $p \in\left(2,2_{s}^{*}\right) .$ The main technical approach is based on variational and topological methods.
\vskip 0.1in
\noindent{\it Keywords:}\quad  Nonlocal elliptic problems,  Positive high energy solution, Half space with a hole

\noindent {\bf AMS} classification:  58J05,  35J60.  \vspace{3mm}

\renewcommand{\theequation}{\thesection.\arabic{equation}}
\section*{1. Introduction}
\setcounter{section}{1}\setcounter{equation}{0}
Let $\mathbb{R}_{+}^{N}=\left\{\left(x^{\prime}, x_{N}\right) \in \mathbb{R}^{N-1} \times \mathbb{R} \mid 0<x_{N}<\infty\right\}$ be the upper half space. $\Omega_{r}$ is  an unbounded smooth domain such that
\[\overline{\Omega_{r}}\subset \mathbb{R}_{+}^{N},\]
 and \[ \mathbb{R}_{+}^{N}\setminus\overline{\Omega_{r}}\subset B_{\rho}(a_{r})\subset \mathbb{R}_{+}^{N}\  \] with $a_{r}=(a,r)\in \mathbb{R}_{+}^{N}$.
 Indeed, $\Omega_r$  is the upper half space with a hole.

 We consider the following fractional elliptic problem:
\begin{eqnarray}\label{eq:1.1}
&&\left\{\begin{array}{l}
(-\Delta)^{s} u+u=|u|^{p-2} u \text { in } \Omega \\
u \geq 0 \quad \text { in }\Omega_{r} \text { and } u \neq 0 \\
u=0 \quad \mathbb{R}^{N} \backslash \Omega
\end{array}\right.,
\end{eqnarray}
where $ \Omega=\Omega_r,\ s \in(0,1), N>2 s$, $p \in\left(2,2_{s}^{*}\right),$ where $2_{s}^{*}=\frac{2 N}{N-2 s}$ is the fractional critical Sobolev exponent and $(-\Delta)^{s}$ is the classical fractional Laplace operator.

When $s \nearrow 1^{-},$ problem(\ref{eq:1.1})  is related to  the following elliptic problem
\begin{eqnarray}\label{eq:1.2}
-\triangle u+u=|u|^{p-1}u,\quad x\in \Omega, \quad u\in H_{0}^{1}(\Omega)
\end{eqnarray}
.
When $\Omega$ is a bounded domain, by applying the compactness of the embedding $H_{0}^{1}(\Omega)\hookrightarrow L^{p}(\Omega), 1< p<\frac{2N}{N-2}$, there is a positive solution of (\ref{eq:1.2}). If $\Omega$ is an unbounded domain, we can not obtain a solution for problem (\ref{eq:1.2}) by using Mountain-Pass Theorem directly
because the embedding $H_{0}^{1}(\Omega)\hookrightarrow L^{p}(\Omega), 1< p<\frac{2N}{N-2}$ is not compactness. However, if $\Omega=\mathbb{R}^{N}$, Berestycki-Lions \cite{21},
 proved that there is a radial positive solution of equation (\ref{eq:1.2})
  by applying the compactness of the embedding $H_{r}^{1}(\mathbb{R}^{N})\hookrightarrow L^{p}(R^{N}),2<p<\frac{2N}{N-2}$, where $H_r^{1}(\mathbb{R}^{N})$ consists of the radially symmetric functions in $H^{1}(\mathbb{R}^{N})$.
  By the P.L.Lions's Concentration-Compactness Principle \cite{7}, there exists an unique positive solution for problem (\ref{eq:1.2}) in $\mathbb{R}^{N}$.
  By moving Plane method,  Gidas-Ni-Nirenberg \cite{3} also proved that every positive solution  of equation
  \begin{eqnarray}\label{eq:1.3}
-\triangle u+u=|u|^{p-1}u,\quad x\in \mathbb{R}^{N}, \quad u\in H^{1}(\mathbb{R}^{N})
\end{eqnarray}
 is radially symmetric with respect to some point in $\mathbb{R}^{N}$ satisfying
\begin{eqnarray}
u(r)re^{r}=\gamma+o(1)\  as  \ r \rightarrow \infty.
\end{eqnarray}
Kwong \cite{188} proved that the positive solution of (\ref{eq:1.3}) is unique up to translations.

  In fact,  Esteban and Lions \cite{1} proved that there
is not any nontrivial solution of equation  (\ref{eq:1.2}) when $\Omega$ is an Esteban-Lions domain (for example $\mathbb{R}_+^3$). Thus, we want to  change the topological property of the domain $\Omega$ to look for a  solution of problem (\ref{eq:1.2}). Wang \cite{4} proved
that if $\rho$ is sufficiently small and $z_{0N}\rightarrow\infty$, then Eq(\ref{eq:1.2}) admits a positive higher energy
solution in $\mathbb{R}_{+}^{N} \backslash \overline{B_{\rho}\left(z_{0}^{\prime}, z_{0 N}\right)}$. Such problem has been extensively studied in recent
years, see for instance, \cite{9,55} and references therein. From the above researches,
 we believed that  the existence of the solution to the equation (\ref{eq:1.2}) will be affected by the topological property of the domain $\Omega$.

Recently, the case $s \in(0,1)$ has received a special attention, because involves the fractional Laplacian operator $(-\Delta)^{s}$, which arises in a quite natural way in many different contexts, such as, among the others, the thin obstacle problem, optimization, finance, phase transitions, stratified materials, anomalous diffusion, crystal dislocation, soft thin films, semipermeable membranes, flame propagation, conservation laws, ultra-relativistic limits of quantum mechanics, quasigeostrophic flows, multiple scattering, minimal surfaces, materials science and water waves, for more detail see \cite{x1,e16,x2,D21,x4}.

When $ \Omega \subset \mathbb{R}^{N}$ is an exterior domain, i.e. an unbounded domain with smooth boundary $\partial \Omega \neq \emptyset$ such that $\mathbb{R}^{N} \backslash \Omega$ is bounded, $s \in(0,1), N>2 s$,  $p \in\left(2,2_{s}^{*}\right),$  the above problem has been studied by O. Alves, Giovanni Molica Bisci , César E. Torres Ledesma in \cite{21} proving that (1.1) does not have a ground state solution. This fact represents a serious difficulty when dealing with this kind of nonlinear fractional elliptic phenomena. More precisely, the authors analyzed the behavior of Palais-Smale sequences and showed a precise estimate of the energy levels where the Palais-Smale condition fails, which made it possible to show that the problem (1.1) has at least one positive solution, for $\mathbb{R}^{N} \backslash \Omega$ small enough.  A key point in the approach explored in \cite{16,9} is the existence and uniqueness, up to a translation, of a positive solution $Q$  of the limit problem associated with (\ref{eq:1.1}) given by
\begin{eqnarray}\label{eq:1.6}
(-\Delta)^{s} u+u=|u|^{p-2} u \text { in } \mathbb{R}^{N},
\end{eqnarray}
for every $p \in\left(2,2_{s}^{*}\right)$. Moreover, $Q$ is radially symmetric about the origin and monotonically decreasing in $|x| .$ On the contrary of the classical elliptic case,  the exponential decay at infinity is not used in order to prove the existence of a nonnegative solution for (\ref{eq:1.1}).

When $\Omega \subset \mathbb{R}^{N}$ is $\mathbb{R}_+^N$, by moving Plane method,  Wenxiong Chen, Yan Li and Pei Ma \cite{11d6}(p123 Theorem 6.8.3 ) proved that there is no nontrivial solution of problem (\ref{eq:1.1}).
   It is interesting in considering the existence of the high energy equation for the problem (\ref{eq:1.1})
in the half space with a hole in $\mathbb{R}^{N}$.
\begin{theorem}
There is $\rho_{0}>0,r_{0}>0$ such that if $0<\rho\leq\rho_{0} $ and $r \geq r_{0}$,
then there is a positive solution of equation (\ref{eq:0.1}).
\end{theorem}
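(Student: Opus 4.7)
The plan is to realize the positive solution as a critical point of the energy functional
\begin{equation*}
I_{r}(u)=\tfrac{1}{2}\,[u]_{s}^{2}+\tfrac{1}{2}\int_{\mathbb{R}^{N}}u^{2}\,dx-\tfrac{1}{p}\int_{\mathbb{R}^{N}}|u|^{p}\,dx
\end{equation*}
restricted to the natural fractional Sobolev space of functions vanishing outside $\Omega_{r}$, by a linking/min--max argument whose geometric structure is forced by the topology introduced by the hole. Since $\overline{\Omega_{r}}\subset\mathbb{R}_{+}^{N}$, the Esteban--Lions / Chen--Li--Ma obstruction shows that any nontrivial concentration must take place strictly inside $\Omega_{r}$; together with the uniqueness of $Q$ for the limit problem (\ref{eq:1.6}), this will control the splitting behaviour of Palais--Smale sequences.

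First I would establish a fractional global compactness (splitting) lemma à la Struwe in the spirit of Alves--Bisci--Torres Ledesma \cite{21}: a bounded $(PS)_{c}$ sequence for $I_{r}$ decomposes, up to a subsequence, as a weak limit in $\Omega_{r}$ plus finitely many translated bubbles $Q(\cdot-y_{n}^{k})$ with $|y_{n}^{k}|\to\infty$, each translate necessarily leaving every compact set so that its energy asymptotically equals the mountain--pass value $m_{\infty}$ of (\ref{eq:1.6}); the half--space bubbles are ruled out because they would solve the problem on a half space, which forces them to vanish. Consequently $I_{r}$ satisfies $(PS)_{c}$ for every $c\in(0,2m_{\infty})\setminus\{m_{\infty}\}$, and in fact the ground--state level on $\Omega_{r}$ equals $m_{\infty}$ and is \emph{not} attained.

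Next I would construct a min--max that produces a critical level strictly between $m_{\infty}$ and $2m_{\infty}$. Using the Berestycki--Lions--type ground state $Q$ and a cut--off $\eta_{r}$ vanishing in $B_{\rho}(a_{r})$ and outside $\mathbb{R}_{+}^{N}$, I define test functions
\begin{equation*}
v_{y}(x)=t_{y}\,\eta_{r}(x)\,Q(x-y),\qquad y\in\Sigma_{r},
\end{equation*}
where $\Sigma_{r}$ is a sphere in $\mathbb{R}_{+}^{N}$ of fixed radius surrounding $a_{r}$ and $t_{y}>0$ is chosen so that $v_{y}$ lies on the Nehari manifold associated with $I_{r}$. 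For this to link correctly with the trivial function, one combines the map $y\mapsto v_{y}$ with a barycenter (or $k$--th coordinate) map $\beta:H^{s}(\Omega_{r})\setminus\{0\}\to\mathbb{R}^{N}$ that, for functions of energy close to $m_{\infty}$, returns a point close to $\Sigma_{r}$; the nontrivial topology of $\Sigma_{r}$ around the hole prevents deformation to constants, giving a min--max value $c_{r}>m_{\infty}$.

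The main obstacle, and the step I would carry out most carefully, is the two--sided energy estimate on the test family: I must show that
\begin{equation*}
\sup_{y\in\Sigma_{r}}I_{r}(v_{y})<2\,m_{\infty},
\end{equation*}
uniformly for $\rho$ small and $r$ large. Because in the fractional setting $Q$ decays only polynomially and the Gagliardo seminorm is nonlocal, the error from the cut--off $\eta_{r}$ must be estimated through tail integrals of $Q$ over $B_{\rho}(a_{r})$ and over $\mathbb{R}^{N}\setminus\mathbb{R}_{+}^{N}$; shrinking $\rho$ controls the first error and letting $r\to\infty$ controls the second (together with the interaction energy between any two bubbles at distance $\sim r$). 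This yields $c_{r}\in(m_{\infty},2m_{\infty})$, so the compactness in step~1 applies and produces a nontrivial critical point $u_{r}$. Finally, the standard truncation argument $|u_{r}|^{p-2}u_{r}$ combined with the strong maximum principle for $(-\Delta)^{s}+1$ upgrades $u_{r}\geq 0,\;u_{r}\not\equiv 0$ to $u_{r}>0$ in $\Omega_{r}$.
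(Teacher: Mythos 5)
Your proposal follows essentially the same route as the paper: a Struwe-type splitting lemma giving compactness strictly between the one-bubble and two-bubble levels, test functions obtained by cutting the limit-problem ground state off near the hole $B_{\rho}(a_{r})$ and near $\partial\mathbb{R}^{N}_{+}$ with errors made small by taking $\rho$ small and $r$ large, a barycenter map combined with a Brouwer-degree/linking argument over a sphere centered at $a_{r}$, and the maximum principle for positivity. The only cosmetic difference is that the paper phrases everything through the constrained minimization on the $L^{p}$-sphere with threshold $2^{(p-2)/p}M$, whereas you use the free functional and Nehari manifold with threshold $2m_{\infty}$; these are equivalent under the standard scaling.
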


The paper is organized as follows. In section 2, we give some preliminary results. The Compactness lemma will be given in Section 3. At last, we give the proof of  Theorem 1.1.

\section{Some preliminary results }
For $s \in(0,1)$ and $N>2 s,$ the fractional Sobolev space of order $s$ on $\mathbb{R}^{N}$ is defined by
$$
H^{s}\left(\mathbb{R}^{N}\right):=\left\{u \in L^{2}\left(\mathbb{R}^{N}\right): \int_{\mathbb{R}^{N}} \int_{\mathbb{R}^{N}} \frac{|u(x)-u(z)|^{2}}{|x-z|^{N+2 s}} d z d x<\infty\right\}
$$
endowed with the norm
$$
\|u\|_{s}:=\left(\int_{\mathbb{R}^{N}}|u(x)|^{2} d x+\int_{\mathbb{R}^{N}} \int_{\mathbb{R}^{N}} \frac{|u(x)-u(z)|^{2}}{|x-z|^{N+2 s}} d z d x\right)^{1 / 2}
.$$
We recall the fractional version of the Sobolev embeddings (see \cite{q16}).

\begin{theorem}
Let  $s \in(0,1),$ then there exists a positive constant $C=C(N, s)>0$ such that
$$
\|u\|_{L^{2_{s}^{*}}\left(\mathbb{R}^{N}\right)}^{2} \leq C \int_{\mathbb{R}^{N}} \int_{\mathbb{R}^{N}} \frac{|u(x)-u(y)|^{2}}{|x-y|^{N+2 s}} d y d x
$$
and then $H^{s}\left(\mathbb{R}^{N}\right) \hookrightarrow L^{q}\left(\mathbb{R}^{N}\right)$ is continuous for all $q \in\left[2,2_{s}^{*}\right] .$ Moreover, if $\Theta \subset \mathbb{R}^{N}$ is a
bounded domain, we have that the embedding $H^{s}\left(\mathbb{R}^{N}\right) \hookrightarrow L^{q}(\Theta)$ is compact for any $q \in\left[2,2_{s}^{*}\right) .$
\end{theorem}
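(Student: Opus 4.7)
The plan is to realize a positive solution of (\ref{eq:0.1}) as a critical point of the energy
\[
I_r(u) = \frac{1}{2}\,[u]_s^2 + \frac{1}{2}\int_{\mathbb{R}^N} u^2\,dx - \frac{1}{p}\int_{\mathbb{R}^N}|u|^p\,dx
\]
restricted to $X_r := \{u \in H^s(\mathbb{R}^N) : u \equiv 0 \text{ a.e.\ on } \mathbb{R}^N \setminus \Omega_r\}$, located at a min-max level strictly between the Nehari ground-state energy $m_\infty$ of the limit problem (\ref{eq:1.6}) and $2m_\infty$. Here $[\cdot]_s$ denotes the Gagliardo seminorm and $m_\infty$ is the Nehari value of the unique positive radial ground state $Q$ of (\ref{eq:1.6}). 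Since by the cited result of Chen--Li--Ma the half-space limit carries no nontrivial solution, only translates of $Q$ can split off a Palais--Smale sequence, so the Compactness Lemma of Section~3 should read: any PS sequence $(u_n)$ at level $c$ decomposes, up to a subsequence, as $u_n = u_0 + \sum_{k=1}^{m} Q(\cdot - y_n^k) + o(1)$ in $H^s$, with $|y_n^k|\to\infty$ staying in $\mathbb{R}_+^N$ and $c = I_r(u_0) + m\,m_\infty$. In particular PS$_c$ holds on the compactness window $c \in (m_\infty,\,2m_\infty)$, which is what the min-max will target.

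To locate a critical level inside this window I would introduce the Nehari manifold $\mathcal{N}_r$, the canonical projection $u\mapsto t(u)u$ onto $\mathcal{N}_r$, and the barycenter $\beta(u) := \int x|u|^p\,dx / \int|u|^p\,dx$. Taking $R$ slightly larger than $\rho$, define a test map $\Psi : S^{N-1}\to\mathcal{N}_r$ by placing truncated translates of $Q$ on a small sphere encircling the hole,
\[
\Psi(\theta) := t_\theta \,\chi_{\rho,r}\, Q(\,\cdot\, - (a_r + R\theta)),
\]
with $\chi_{\rho,r}$ a smooth cutoff vanishing on $B_\rho(a_r) \cup \{x_N\le 0\}$, and set
\[
c_r := \inf_{\gamma \in \Gamma}\ \max_{\theta \in S^{N-1}} I_r(\gamma(\theta)),\qquad \Gamma := \{\gamma\in C(S^{N-1},\mathcal{N}_r) : \gamma \simeq \Psi\}.
\]
The proof then reduces to the two-sided bound $m_\infty < c_r < 2m_\infty$. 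The upper bound follows by estimating $I_r(\Psi(\theta))$: the polynomial decay $Q(x)\sim |x|^{-(N+2s)}$ of the fractional ground state makes all three contributions ($L^2$, $L^p$, and Gagliardo seminorm) of the truncation error tend to zero as $\rho \to 0$ and $r\to\infty$, so $\max_\theta I_r(\Psi(\theta))\to m_\infty$, hence $<2m_\infty$ for $\rho\le\rho_0$ and $r\ge r_0$. The lower bound is an obstruction/degree argument: a standard profile decomposition forces any $u\in\mathcal{N}_r$ with $I_r(u)\le m_\infty+\epsilon$ to be $H^s$-close to a single translate $Q(\cdot-y)$, so $\beta(u)\approx y$; thus if some $\gamma \in \Gamma$ had $\max I_r\circ\gamma \le m_\infty+\epsilon$, then $\beta\circ\gamma : S^{N-1}\to\mathbb{R}^N$ would be homotopic to a constant, contradicting the nontrivial linking of $\beta\circ\Psi$ with $B_\rho(a_r)$.

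With $c_r$ inside the compactness window, a standard deformation argument on $\mathcal{N}_r$ yields a Palais--Smale sequence at level $c_r$, which the Compactness Lemma then forces to converge strongly to a critical point $u_r \in X_r$. Replacing $u_r$ by $|u_r|$ is legitimate because $\bigl||u(x)|-|u(y)|\bigr|\le|u(x)-u(y)|$ gives $I_r(|u|)\le I_r(u)$ and a single projection returns $|u|$ to $\mathcal{N}_r$; invoking the strong maximum principle for $(-\Delta)^s + I$ on $\Omega_r$ then promotes $u_r$ to the sought-after strictly positive solution of (\ref{eq:0.1}).

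\textbf{Main obstacle.} The technical heart of the argument is the quantitative upper bound $c_r < 2m_\infty$. Because $Q$ decays only polynomially in the fractional regime (in contrast to the exponential decay exploited by Wang in the local case $s=1$), the Gagliardo seminorm $[\chi_{\rho,r}Q(\cdot - y)]_s^2$ collects long-range interactions between the cutoff region near the hole, the half-space boundary at distance $\sim r$, and the bulk of the bump. Controlling these nonlocal tail terms uniformly, while simultaneously preserving the nontrivial linking of $\beta\circ\Psi$ around the hole, is precisely where the joint smallness of $\rho$ and largeness of $r$ enter genuinely, and is where I expect the bulk of the analytical work to lie.
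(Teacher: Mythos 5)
Your proposal does not address the statement you were asked to prove. The statement is Theorem 2.1, the fractional Sobolev embedding: the inequality $\|u\|_{L^{2_{s}^{*}}(\mathbb{R}^{N})}^{2} \leq C \int_{\mathbb{R}^{N}}\int_{\mathbb{R}^{N}} \frac{|u(x)-u(y)|^{2}}{|x-y|^{N+2s}}\,dy\,dx$, the continuity of $H^{s}(\mathbb{R}^{N}) \hookrightarrow L^{q}(\mathbb{R}^{N})$ for $q \in [2,2_{s}^{*}]$, and the compactness of $H^{s}(\mathbb{R}^{N}) \hookrightarrow L^{q}(\Theta)$ for bounded $\Theta$ and $q \in [2,2_{s}^{*})$. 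What you have written is instead a min-max/Nehari-manifold outline for the paper's main existence result (Theorem 1.1): a barycenter map, a compactness window $(m_\infty, 2m_\infty)$, a linking argument around the hole, and a discussion of the polynomial decay of the ground state $Q$. None of this establishes, or even engages with, the Sobolev inequality or the compact embedding on bounded domains; indeed your argument silently \emph{uses} Theorem 2.1 (e.g.\ to make sense of the $L^p$ term in the energy and to run the profile decomposition) rather than proving it.

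For the record, the paper itself does not prove Theorem 2.1 either: it is recalled as a known result with citations to the literature on fractional Sobolev spaces (the Felmer--Quaas--Tan paper and the Di Nezza--Palatucci--Valdinoci survey). If you were to prove it, the standard route is: establish the critical inequality for $u \in C_c^\infty(\mathbb{R}^{N})$ by a covering/rearrangement argument or via the Fourier characterization of the Gagliardo seminorm, extend by density, interpolate between $L^2$ and $L^{2_{s}^{*}}$ to get continuity for intermediate $q$, and obtain compactness on bounded domains from a fractional Riesz--Fr\'echet--Kolmogorov criterion (equicontinuity of translates controlled by the Gagliardo seminorm). Your submission contains none of these steps, so as a proof of the stated theorem it is a complete miss, whatever its merits as a sketch of the existence argument elsewhere in the paper.
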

Hereafter, we denote by $X_{0}^{s} \subset H^{s}\left(\mathbb{R}^{N}\right)$ the subspace defined by
$$
X_{0}^{s}:=\left\{u \in H^{s}\left(\mathbb{R}^{N}\right): u=0 \text { a.e. in } \mathbb{R}^{N} \backslash \Omega\right\}.
$$
We endow $X_{0}^{s}$ with the norm $\|\cdot\|_{s}$. Moreover we introduce the following norm
$$
\|u\|:=\left(\int_{\Omega_{r}}|u(x)|^{2} d x+\iint_{\mathcal{Q}} \frac{|u(x)-u(z)|^{2}}{|x-z|^{N+2 s}} d z d x\right)^{\frac{1}{2}}
$$
where $\mathcal{Q}:=\mathbb{R}^{2 N} \backslash\left(\Omega_{r}^{c} \times \Omega_{r}^{c}\right)$. We point out that $\|u\|_{s}=\|u\|$ for any $u \in X_{0}^{s}$. Since $\partial \Omega$ is bounded and smooth, by [\cite{D21}, Theorem 2.6], we have the following result.
\begin{theorem}
The space $C_{0}^{\infty}(\Omega)$ is dense in $\left(X_{0}^{s},\|\cdot\|\right) .$
\end{theorem}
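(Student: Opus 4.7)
The plan is a standard three-step approximation adapted to the nonlocal Gagliardo seminorm: cut off to bounded support, translate the support strictly into $\Omega$, then mollify. Since $\|u\|_s = \|u\|$ for $u \in X_0^s$, it is enough to work with the $H^s(\mathbb{R}^N)$ norm throughout.

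\textbf{Step 1 (Cutoff).} Given $u \in X_0^s$, pick $\eta_R \in C_c^\infty(\mathbb{R}^N)$ with $\eta_R \equiv 1$ on $B_R$, $\eta_R \equiv 0$ off $B_{2R}$, $0 \le \eta_R \le 1$ and $|\nabla \eta_R|_\infty \le C/R$. Then $u_R := \eta_R u \in X_0^s$ has bounded support. The $L^2$ convergence $u_R \to u$ is immediate from dominated convergence. For the Gagliardo part I would write $u_R(x) - u_R(y) = \eta_R(x)(u(x) - u(y)) + u(y)(\eta_R(x) - \eta_R(y))$, apply $(a+b)^2 \le 2a^2 + 2b^2$, and control the cross term by splitting $\{|x-y|\le 1\}$ (using $|\eta_R(x)-\eta_R(y)|\le C|x-y|/R$) and $\{|x-y|>1\}$ (using $|\eta_R|\le 1$); both go to zero as $R\to\infty$ by dominated convergence since the Gagliardo integral of $u$ is finite.

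\textbf{Step 2 (Translation to the interior).} Now assume $u \in X_0^s$ has bounded support. Cover $\text{supp}(u) \cap \partial\Omega$ by finitely many smooth boundary charts in which $\partial\Omega$ is flattened to $\{y_N = 0\}$, supplemented by an interior chart; let $\{\chi_j\}$ be a subordinate partition of unity. It suffices to approximate each $\chi_j u$. In a boundary chart the problem reduces, after a smooth change of variables (which preserves $H^s$ up to equivalent norms), to the case where $v \in H^s(\mathbb{R}^N)$ is supported in $\{y_N \ge 0\}$ and has bounded support; set $v_\tau(y) := v(y - \tau e_N)$ for small $\tau > 0$. Then $\text{supp}(v_\tau) \subset \{y_N \ge \tau\}$ is at positive distance from the transformed boundary, and $v_\tau \to v$ in $H^s(\mathbb{R}^N)$ by continuity of translation in $H^s$.

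\textbf{Step 3 (Mollification).} Once $v \in X_0^s$ has support at distance $d > 0$ from $\partial\Omega$, convolve with a standard mollifier $\rho_\delta$ of radius $\delta < d$: then $v * \rho_\delta \in C_c^\infty(\Omega)$, and $v * \rho_\delta \to v$ in $H^s(\mathbb{R}^N)$ by the standard estimate $\|v*\rho_\delta - v\|_{H^s} \to 0$ (one checks it at the Fourier side and invokes dominated convergence). Combining the three steps, for any $u \in X_0^s$ and $\varepsilon > 0$ one finds $\varphi \in C_c^\infty(\Omega)$ with $\|u - \varphi\| < \varepsilon$.

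The main obstacle is Step 2: the translation trick relies on smoothness of $\partial \Omega$ so it can be locally straightened, and on the quasi-invariance of $H^s$ under smooth diffeomorphisms. For our $\Omega_r$, the two boundary components (the hyperplane portion and the spherical portion of the hole) are both smooth; near the flat piece the translation in the $e_N$ direction works directly, while near $\partial B_\rho(a_r)$ one uses translations in the inward normal direction via the boundary charts. The partition of unity argument must then be combined with an estimate showing that multiplication by $\chi_j$ is bounded on $H^s$ (handled by the same type of splitting used in Step 1).
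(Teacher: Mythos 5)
Your proposal is correct in outline, but it takes a genuinely different route from the paper: the paper offers no argument at all, simply invoking [Molica Bisci--R\u{a}dulescu--Servadei, Theorem 2.6] on the grounds that ``$\partial\Omega$ is bounded and smooth,'' whereas you reconstruct the standard cutoff--translate--mollify proof from scratch. The comparison is actually instructive. The citation is economical, but its stated hypothesis is suspect here: for $\Omega_r$ (the half space with a hole) the boundary is \emph{not} bounded --- it contains an unbounded hypersurface near $\{x_N=0\}$ in addition to the sphere $\partial B_\rho(a_r)$ --- so the quoted theorem, which is formulated for bounded $\Omega$ (or bounded $\partial\Omega$, as for exterior domains), does not apply verbatim and some adaptation is needed. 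Your direct proof supplies exactly that adaptation: Step 1 reduces to functions of bounded support, after which $\mathrm{supp}(u)\cap\partial\Omega_r$ is compact and is covered by finitely many charts, so the unboundedness of $\partial\Omega_r$ never enters Steps 2 and 3. The remaining ingredients you use (continuity of translations in $H^s$, quasi-invariance of the Gagliardo seminorm under bi-Lipschitz changes of variables, boundedness of multiplication by a fixed smooth cutoff, convergence of mollifications in $H^s$) are all standard and correctly deployed; the only point worth spelling out in a full write-up is that the inward translation in each boundary chart must keep the support of $\chi_j u$ compactly inside the chart domain, which holds for $\tau$ small since $\chi_j$ has compact support there. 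In short: your argument is essentially the proof of the theorem the paper cites, and it is the more reliable of the two in this unbounded-boundary setting.
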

In what follows, we denote by $H^{s}(\Omega)$ the usual fractional Sobolev space endowed with the norm
$$
\|u\|_{H^{s}}:=\left(\int_{\Omega}|u(x)|^{2} d x+\int_{\Omega} \int_{\Omega} \frac{|u(x)-u(z)|^{2}}{|x-z|^{N+2 s}} d z d x\right)^{\frac{1}{2}}.
$$
Related to these fractional spaces, we have the following properties

{\bf Proposition 2.3.}The following assertions hold true:

(i) If $v \in X_{0}^{s}$, we have that $v \in H^{s}(\Omega)$ and
$$
\|v\|_{H^{s}} \leq\|v\|_{s}=\|v\| .
$$

(ii) Let $\Theta$ an open set with continuous boundary. Then, there exists a positive constant $\mathfrak{C}=$ $\mathfrak{C}(N, s),$ such that
$$
\|v\|_{L^{2_{s}^{*}}(\Theta)}^{2}=\|v\|_{L^{2^{*}}\left(\mathbb{R}^{N}\right)}^{2} \leq \mathfrak{C} \iint_{\mathbb{R}^{2 N}} \frac{|v(x)-v(z)|^{2}}{|x-z|^{N+2 s}} d z d x
$$
for every $v \in X_{0}^{s} ;$ see [\cite{e16}, Theorem 6.5$]$.

From now on, $M_{\infty}$ denotes the following constant
\begin{eqnarray}\label{eq:1.s6}
M_{\infty}:=\inf \left\{\|u\|_{s}^{2}: u \in H^{s}\left(\mathbb{R}^{N}\right) \text { and } \int_{\mathbb{R}^{N}}|u(x)|^{p} d x=1\right\},
\end{eqnarray}
which is positive by Theorem 2.1. Furthermore, for any $v \in H^{s}\left(\mathbb{R}^{N}\right)$ and $z \in \mathbb{R}^{N},$ we set the function
$$
v^{z}(x):=v(x+z).
$$
Then, by doing the change of variable $\tilde{x}=x+z$ and $\tilde{y}=y+z,$ it is easily seen that
$$\left\|v^{z}\right\|_{s}^{2}=\|v\|_{s}^{2} \text { as well as }\left\|v^{z}\right\|_{L^{p}\left(\mathbb{R}^{N}\right)}=\|v\|_{L^{p}\left(\mathbb{R}^{N}\right)} .$$
Arguing as in \cite{b16} the following result holds true.

\begin{theorem}
Let $\left\{u_{n}\right\} \subset H^{s}\left(\mathbb{R}^{N}\right)$ be a minimizing sequence such that
$$
\left\|u_{n}\right\|_{L^{p}\left(\mathbb{R}^{N}\right)}=1 \text { and }\left\|u_{n}\right\|_{s}^{2} \rightarrow M_{\infty} \text { as } n \rightarrow+\infty.
$$
Then, there is a sequence $\left\{y_{n}\right\} \subset \mathbb{R}^{N}$ such that $\left\{u_{n}^{y_{n}}\right\}$ has a convergent subsequence, and so, $M_{\infty}$ is attained.
\end{theorem}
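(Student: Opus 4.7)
The plan is to run Lions' concentration–compactness alternative on the sequence of probability measures $\mu_n := |u_n|^p\,dx$ on $\mathbb{R}^N$, ruling out vanishing and dichotomy so that the remaining tightness option produces the translations $\{y_n\}$. Since $\|u_n\|_s^2 \to M_\infty$, the sequence is bounded in $H^s(\mathbb{R}^N)$, and up to a subsequence $\{u_n\}$ converges weakly in $H^s(\mathbb{R}^N)$ and a.e.\ in $\mathbb{R}^N$. Vanishing, i.e.\ $\sup_{y\in\mathbb{R}^N}\int_{B_R(y)}|u_n|^p\,dx \to 0$ for every $R>0$, is impossible because it would force $\|u_n\|_{L^p(\mathbb{R}^N)}\to 0$ via the standard fractional Lions vanishing lemma (applied in conjunction with the continuous embedding $H^s(\mathbb{R}^N)\hookrightarrow L^{2^*_s}(\mathbb{R}^N)$ from Theorem 2.1), contradicting $\|u_n\|_{L^p}=1$.

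The core of the argument is to exclude dichotomy. Suppose there exists $\alpha\in(0,1)$ together with cut-off functions producing $u_n = u_n^{(1)} + u_n^{(2)} + r_n$ with $\|u_n^{(1)}\|_{L^p}^p \to \alpha$, $\|u_n^{(2)}\|_{L^p}^p \to 1-\alpha$, essentially disjoint supports growing to infinity apart, and $\|r_n\|_{L^p} \to 0$. Using the definition of $M_\infty$ one finds $\|u_n^{(i)}\|_s^2 \gtrsim M_\infty \|u_n^{(i)}\|_{L^p}^2$, while the Gagliardo seminorm is essentially additive on pieces whose supports drift apart (the nonlocal cross term decays with the distance between supports because of the $|x-z|^{-N-2s}$ kernel). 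Passing to the limit one would obtain
\begin{equation*}
M_\infty \;\geq\; M_\infty\bigl(\alpha^{2/p}+(1-\alpha)^{2/p}\bigr).
\end{equation*}
Since $p>2$, the function $t\mapsto t^{2/p}$ is strictly concave on $[0,1]$, so $\alpha^{2/p}+(1-\alpha)^{2/p}>1$ for $\alpha\in(0,1)$, giving a contradiction. This strict subadditivity is the decisive point and the step I expect to be the main obstacle, because controlling the nonlocal cross term requires some care and the cut-off procedure must be carried out in the fractional norm; I would handle it by choosing radii $R_n\to\infty$ slowly enough that the Gagliardo contribution of the transition annulus is $o(1)$.

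Once vanishing and dichotomy are excluded, the compactness alternative furnishes $\{y_n\}\subset\mathbb{R}^N$ and, for every $\varepsilon>0$, some $R=R(\varepsilon)>0$ with $\int_{B_R(y_n)}|u_n|^p\,dx\geq 1-\varepsilon$ for all $n$ large. I would then set $v_n:=u_n^{y_n}$; by translation invariance, $\|v_n\|_s^2 \to M_\infty$ and $\|v_n\|_{L^p}=1$. Extract $v_n\rightharpoonup v$ in $H^s(\mathbb{R}^N)$. By Theorem 2.1 the embedding is compact into $L^p(B_R)$ for each $R$, hence
\begin{equation*}
\int_{B_R(0)}|v|^p\,dx \;=\; \lim_{n\to\infty}\int_{B_R(0)}|v_n|^p\,dx \;\geq\; 1-\varepsilon,
\end{equation*}
which forces $\|v\|_{L^p(\mathbb{R}^N)}^p = 1$ after sending $\varepsilon\to 0$.

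To conclude, weak lower semicontinuity of $\|\cdot\|_s$ gives $\|v\|_s^2\leq M_\infty$, and the definition of $M_\infty$ combined with $\|v\|_{L^p}=1$ gives $\|v\|_s^2\geq M_\infty$; hence equality and $v$ attains $M_\infty$. Finally, using the Brezis–Lieb decomposition $\|v_n\|_s^2 = \|v\|_s^2 + \|v_n-v\|_s^2 + o(1)$ (which follows from weak convergence together with the pointwise a.e.\ convergence) and the matching decomposition for the $L^p$ norm, one gets $\|v_n-v\|_s\to 0$, so $\{v_n\}$ itself (not just a subsequence) converges strongly in $H^s(\mathbb{R}^N)$, completing the proof.
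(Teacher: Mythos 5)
Your argument is essentially correct, and it is worth noting that the paper itself offers no proof of this statement: it only says ``Arguing as in \cite{b16}'' (Benci--Cerami), so the comparison is with the standard literature rather than with an in-text argument. Your route — Lions' concentration--compactness applied to $\mu_n=|u_n|^p\,dx$, ruling out vanishing by the fractional Lions lemma and dichotomy by strict subadditivity — is exactly the classical scheme, and the inequality $M_\infty\geq M_\infty\bigl(\alpha^{2/p}+(1-\alpha)^{2/p}\bigr)$ with the strict concavity of $t\mapsto t^{2/p}$ (since $p>2$) is the right contradiction. The one step that genuinely needs care, as you correctly flag, is the dichotomy exclusion: in the nonlocal setting the cut-off pieces $u_n^{(1)},u_n^{(2)}$ do not interact locally, so you must estimate both the Gagliardo error of the cut-offs and the cross term $\iint \frac{(u_n^{(1)}(x)-u_n^{(1)}(z))(u_n^{(2)}(x)-u_n^{(2)}(z))}{|x-z|^{N+2s}}\,dz\,dx$, which can be negative; your plan (radii $R_n\to\infty$ chosen slowly, kernel decay between supports drifting apart) is the standard fix, so this is a matter of writing up details rather than a gap. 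A lighter alternative, closer in spirit to Benci--Cerami and common in the fractional literature, avoids the cut-off machinery entirely: choose $y_n$ maximizing $\int_{B_1(y_n)}|u_n|^p\,dx$, rule out vanishing as you do, let $v\neq 0$ be the weak limit of $v_n=u_n^{y_n}$, and combine the Hilbert-space identity $\|v_n\|_s^2=\|v\|_s^2+\|v_n-v\|_s^2+o(1)$ with Brezis--Lieb in $L^p$, $1=\|v\|_{L^p}^p+\|v_n-v\|_{L^p}^p+o(1)$, and the homogeneous inequality $\|u\|_s^2\geq M_\infty\|u\|_{L^p}^2$ applied to $v$ and $v_n-v$; strict concavity then forces $\|v_n-v\|_{L^p}\to 0$, hence $\|v\|_{L^p}=1$, and your final paragraph applies verbatim. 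Two small corrections to your write-up: the claim that $\{v_n\}$ ``itself, not just a subsequence'' converges should be dropped, since you already extracted subsequences (and the statement only asks for a subsequence), and in the Hilbertian norm splitting for $\|\cdot\|_s$ weak convergence alone suffices — the a.e.\ convergence is only needed for the $L^p$ Brezis--Lieb step, which your version of the argument does not actually use.
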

As a byproduct of the above result the next corollary is obtained.

 {\bf Corollary 1}  There is $v \in H^{s}\left(\mathbb{R}^{N}\right)$ such that $\|v\|_{s}=M_{\infty}$ and $\|v\|_{L^{p}\left(\mathbb{R}^{N}\right)}=1 .$

Let $\varphi$ be a minimizer of $(\ref{eq:1.s6}),$ that is
$$
\varphi \in H^{s}\left(\mathbb{R}^{N}\right), \quad \int|\varphi|^{p} d x=1 \text { and } M_{\infty}=\|\varphi\|_{s}^{2}.
$$
Take \begin{eqnarray}
&&\xi \in C^{\infty}(\mathbb{R}^{+},\mathbb{R}),\eta\in  C^{\infty}(\mathbb{R},\mathbb{R}),
\end{eqnarray}
such that
\[\xi(t)=\left\{
 \begin{array}{ll}
0,0\leq t\leq \rho,\\[2mm]
1,t\geq 2\rho,
\end{array}
\right.\]
\[ \eta(t)=\left\{
 \begin{array}{ll}
0,t\leq 0,\\[2mm]
1,t\geq 1,
\end{array}
\right.\]
and \[0\leq \zeta\leq 1,0\leq \eta\leq 1.
\]

Now, we define
\[f_{y}(x)=\xi(|x-a_{r}|)\eta(x_{N})\varphi(x-y),\]
and
\[\Psi_{y}(x)=\frac{f_{y}(x)}{\left\|f_{y}\right\|_{L^{p}\left(\mathbb{R}^{N}\right)}}=c_{y} f_{y}(x) \text { where } c_{y}=\frac{1}{\left\|f_{y}\right\|_{L^{p}\left(\mathbb{R}^{N}\right)}} .\]
Throughout this section we endow $X_{0}^{s}$ with the norm
$$
\|u\|:=\left(\iint_{\mathcal{Q}} \frac{|u(x)-u(y)|^{2}}{|x-y|^{n+2 s}} d y d x+\int_{\Omega_{r}}|u|^{2} d x\right)^{1 / 2}
$$
and denote by $M>0$ the number
\begin{eqnarray}\label{eq:1w.sq6}
M:=\inf \left\{\|u\|^{2}: u \in X_{0}^{s}, \int_{\Omega_{r}}|u(x)|^{p} d x=1\right\} .
\end{eqnarray}
\begin{lemma}\label{lm:2.4}
Let $y=(y^{'},y_{N})$ , we have\\
(1)$\|f_{y}-\varphi(x-y)\|_{L^{p}(\mathbb{R}^{N})}=o(1)$, $|y-a_{r}|\rightarrow \infty$, and $y_{N}\rightarrow +\infty$, or $y_{N} \rightarrow \infty$ and $\rho \rightarrow 0$;
(2)$\|f_{y}-\varphi(x-y)\|=o(1),|y-a_{r}|\rightarrow \infty$, and $y_{N}\rightarrow +\infty$, or $y_{N} \rightarrow +\infty$ and $\rho \rightarrow 0$.\label{11}
\end{lemma}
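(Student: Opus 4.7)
The plan is to exploit that $f_y(x) - \varphi(x-y) = [\xi(|x-a_r|)\eta(x_N) - 1]\varphi(x-y)$, and that the bracketed cutoff factor is bounded by $1$ in absolute value and vanishes outside $E := B_{2\rho}(a_r) \cup \{x_N \leq 1\}$. For part (1), I would bound $\|f_y - \varphi(\cdot-y)\|_{L^p}^p \leq \int_E |\varphi(x-y)|^p\,dx$; after the change of variables $z = x - y$, this splits as $\int_{B_{2\rho}(a_r - y)} |\varphi(z)|^p\,dz + \int_{\{z_N \leq 1 - y_N\}} |\varphi(z)|^p\,dz$. The second piece tends to zero as $y_N \to \infty$ by absolute continuity of the integral of $|\varphi|^p$, and the first piece tends to zero either because the ball recedes to spatial infinity (when $|y - a_r| \to \infty$) or because its radius shrinks (when $\rho \to 0$).

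For part (2) I would first reduce by translation invariance of the $H^s$ norm on $\mathbb{R}^N$ to showing $\|u_y\|_s \to 0$, where $u_y(x) := f_y(x+y) - \varphi(x) = (H_y(x) - 1)\varphi(x)$ with $H_y(x) := \xi_y(x)\eta_y(x)$, $\xi_y(x) := \xi(|x+y-a_r|)$, $\eta_y(x) := \eta(x_N + y_N)$. Under both regimes, $H_y \to 1$ pointwise and $|H_y| \leq 1$, so the $L^2$ contribution to $\|u_y\|_s^2$ vanishes by Lebesgue dominated convergence with majorant $\varphi^2$.

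The core of the work is the Gagliardo seminorm. Using $|u_y(x) - u_y(z)|^2 \leq 2(H_y(x)-1)^2(\varphi(x)-\varphi(z))^2 + 2(H_y(x) - H_y(z))^2 \varphi(z)^2$, the first contribution vanishes by dominated convergence, since its integrand tends to $0$ pointwise and is dominated by the integrable kernel $2(\varphi(x)-\varphi(z))^2 / |x-z|^{N+2s}$. For the second term, I would split $(H_y(x) - H_y(z))^2 \leq 2(\xi_y(x) - \xi_y(z))^2 + 2(\eta_y(x) - \eta_y(z))^2$ and treat the two pieces separately. For the $\xi_y$-piece, one invokes the standard cutoff scaling bound $[\xi_y]_{H^s}^2 \leq C\rho^{N-2s}$ (obtained by splitting the seminorm at scale $\rho$, using the $C/\rho$-Lipschitz estimate below that scale and the $L^\infty$-bound above); combined with boundedness of $\varphi$, this yields vanishing when $\rho \to 0$, while in the regime $|y - a_r| \to \infty$ one instead uses that $\varphi$ is uniformly small on a neighborhood of $A_y := B_{2\rho}(a_r - y)$, which recedes to spatial infinity, thanks to the decay at infinity of the ground state. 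For the $\eta_y$-piece, one reduces to a one-dimensional integral in $x_N$ by integrating out the transverse variable $w' = x' - z'$ in the Gagliardo kernel (producing a factor $|w_N|^{-1-2s}$), then uses that the transition layer $\{x_N \in [-y_N,\,1 - y_N]\}$ is shifted to $-\infty$ while $\varphi$ has quantitative decay in $x_N$.

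The main obstacle is precisely this second contribution $\iint (H_y(x) - H_y(z))^2 \varphi(z)^2 / |x-z|^{N+2s}\,dx\,dz$: a naive bound $(H_y(x) - H_y(z))^2 \leq 4$ produces a majorant $\varphi(z)^2/|x-z|^{N+2s}$ that fails to be integrable at short distances, so pointwise dominated convergence is unavailable. One must combine the cutoff scaling estimate (yielding the $\rho^{N-2s}$ factor) with the quantitative decay of $\varphi$, calibrated so that in the regime $|y-a_r|\to\infty$ the weighted integral is controlled by smallness of $\varphi$ on the translated support $A_y$ and on the translated transition layer of $\eta_y$, while in the regime $\rho\to 0$ it is controlled by the smallness of $\rho^{N-2s}$ together with $\|\varphi\|_\infty<\infty$.
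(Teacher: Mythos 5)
Your proof is correct, and it follows the paper's skeleton for most of the argument: part (1) is in substance identical (the paper also reduces to $\int|\xi\eta-1|^p|\varphi(\cdot-y)|^p$, treating the regime $|y-a_r|,y_N\to\infty$ by dominated convergence and the regime $\rho\to0$, $y_N\to\infty$ by splitting over $B_{2\rho}(a_r)$ and the strip $\{x_N\le1\}$), and in part (2) the paper uses the same decomposition of the Gagliardo difference into a $(\xi\eta-1)$-times-$(\varphi(x)-\varphi(z))$ term (killed by dominated convergence, as you do) plus a cutoff-difference-times-$\varphi$ term. Where you genuinely diverge is in the treatment of that last, delicate term: the paper handles it in the first regime via a mean value theorem pointwise majorant, split near/far from the diagonal (giving exponents $\tfrac N2+s-1$ and $\tfrac N2+s$), followed by dominated convergence, and in the second regime it simply quotes an external cutoff lemma ([Xiang--Zhang--Zhang, Lemma 2.3]) to get $I_1\to0$ as $\rho\to0$; you instead split the cutoff as $\xi_y\eta_y$ and estimate each factor self-containedly --- the scaling bound $[\xi_y]_{H^s}^2\le C\rho^{N-2s}$ (using $N>2s$) for the hole cutoff, and a one-dimensional reduction with kernel $|x_N-z_N|^{-1-2s}$ for the half-space cutoff, which correctly isolates the fact that the $\eta$-contribution needs $y_N\to+\infty$ rather than $\rho\to0$ (a point the paper's $I_1$ estimate glosses over). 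Your approach buys a self-contained and arguably sharper bookkeeping of which limit kills which piece; the paper's route is shorter because it outsources the cutoff estimate. One small point to make explicit in your regime $|y-a_r|\to\infty$ with $\rho$ fixed: the smallness of $\varphi$ near the receding ball $B_{2\rho}(a_r-y)$ requires that $\varphi$ vanishes at infinity (true since, up to rescaling, $\varphi$ is the bounded, radially decreasing ground state of the limit equation, facts the paper also uses implicitly); alternatively you can avoid pointwise decay altogether by bounding the $x$-integral of the kernel uniformly in $z$ and using only the $L^2$-tail $\int_{B_{2\rho+1}(a_r-y)}\varphi^2\to0$.
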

\begin{proof}
Similarly as \cite{9,4}, we have

(i) After the change of variables $z=x-y,$  one has
\[
\begin{array}{ll}
\ \|f_{y}-\varphi(x-y)\|^{p}_{L^{p}(\mathbb{R}^{N})}
&=\int_{\mathbb{R}^{N}}|\xi(|x-a_{r}|)\eta(x_{N})-1|^{p}|\varphi(x-y)|^{p}\ dx\\[2mm]
&=\int_{\mathbb{R}^{N}}|\xi(|x+y-a_{r}|)\eta(x_{N}+y)-1|^{p}|\varphi(z)|^{p}\ dz
\end{array}
\]
Let $g_{y}(z)=|\xi(|x+y-a_{r}|)\eta(x_{N}+y)-1|^{p}|\varphi(z)|^{p}.$ Since $|y-a_{r}|\rightarrow \infty$, and $y_{N}\rightarrow +\infty$, it follows that
$$
g_{y}(z) \rightarrow 0 \text { a.e. in } \mathbb{R}^{N}
$$
Now, taking into account that
$$
g_{y}(z)=|\xi(|x+y-a_{r}|)\eta(x_{N}+y)-1|^{p}|\varphi(z)|^{p} \leq 2^{p}|\varphi(z)|^{p} \in L^{1}\left(\mathbb{R}^{N}\right),
$$
the Lebesgue's dominated convergence theorem yields
$$
\int_{\mathbb{R}^{N}} g_{y}(z) d z \rightarrow 0 \text { as } |y-a_{r}|\rightarrow \infty, \  y_{N}\rightarrow +\infty.
$$
Therefore
$$
\|f_{y}-\varphi(x-y)\|_{L^{p}(\mathbb{R}^{N})}=o(1), |y-a_{r}|\rightarrow \infty, \  y_{N}\rightarrow +\infty
$$
\[
\begin{array}{ll}
\ \|f_{y}-\varphi(x-y)\|^{p}_{L^{p}(\mathbb{R}^{N})}
&=\int_{B_{2\rho(a_{r})}\cup\{x_{N}\leq 1\}}|\xi(|x-a_{r}|)\eta(x_{N})-1|^{p}|\varphi(x-y)|^{p}\ dx\\[2mm]
&=\int_{B_{2\rho(a_{r})}}|\xi(|x-a_{r}|)\eta(x_{N})-1|^{p}|\varphi(x-y)|^{p}\ dx\\[2mm]
&+\int_{\{x_{N}\leq 1\}}|\xi(|x-a_{r}|)\eta(x_{N})-1|^{p}|\varphi(x-y)|^{p}\ dx
\end{array}
\]
and
\[
\begin{array}{ll}
\ \int_{B_{2\rho(a_{r})}}|\xi(|x-a_{r}|)\eta(x_{N})-1|^{p}|\varphi(x-y)|^{p}\ dx
&\leq C mesB_{2\rho(a_{r})}\max_{x\in\mathbb{R}^{N}}\varphi(x)\rightarrow 0\ as\ \rho \rightarrow 0,
\end{array}
\]
\[
\begin{array}{ll}
\ \int_{\{x_{N}\leq 1\}}|\xi(|x-a_{r}|)\eta(x_{N})-1|^{p}|\varphi(x-y)|^{p}\ dx
&=\int_{\{x_{N}\leq 1\}}|\eta(x_{N})-1|^{p}|\varphi(x-y)|^{p}\ dx\\[2mm]
&=\int_{\{z_{3}\leq y_{N}+1\}}|\eta (x_{N}+y_{N})-1|^{p}|\varphi(z)|^{p}\ dz\\[2mm]
& \rightarrow 0 \ as\ y_{N}\rightarrow +\infty,\  \rho \rightarrow 0.
\end{array}
\]
Therefore
$$
\|f_{y}-\varphi(x-y)\|_{L^{p}(\mathbb{R}^{N})}=o(1)\ y_{N} \rightarrow \infty\  and \ \rho \rightarrow 0.
$$

(ii)
Now, we claim that
$$
\int_{\mathbb{R}^{N}} \int_{\mathbb{R}^{N}} \frac{\left|(\xi(|x-a_{r}|)\eta(x_{N})-1) \varphi\left(x-y\right)-(\xi(|z-a_{r}|)\eta(z_{N})-1) \varphi\left(z-y\right)\right|^{2}}{|x-z|^{N+2 s}} d z d x=o_{n}(1)
$$
Indeed, let
$$
\Upsilon_{u}(x, y):=\frac{u(x)-u(z)}{|x-z|^{\frac{N}{2}+s}}
$$
Then, after the change of variables $\tilde{x}=x-y$ and $\tilde{y}=z-y,$ one has
\[\begin{array}{ll}
\int_{\mathbb{R}^{N}} \int_{\mathbb{R}^{N}} \frac{\left|(\xi(|x-a_{r}|)\eta(x_{N})-1) \varphi\left(x-y\right)-(\xi(|z-a_{r}|)\eta(z_{N})-1) \varphi\left(z-y\right)\right|^{2}}{|x-z|^{N+2 s}} d z d x
& =\int_{\mathbb{R}^{N}} \int_{\mathbb{R}^{N}}\left|\Upsilon_{n}(x, z)\right|^{2} d z d x.

\end{array}
\]
where
$$
\Phi_{n}(x, z):=\frac{\left(\xi(|x+y-a_{r}|)\eta(x_{N}+y_{N}))-1\right) \varphi(x)-\left(\xi(|z+y-a_{r}|)\eta(z_{N}+y_{N}))-1\right) \varphi(z)}{|x-z|^{\frac{N}{2}+s}}
$$
Recalling that  $|y-a_{r}|\rightarrow \infty$, and $y_{N}\rightarrow +\infty$, we also have
$$
\Upsilon_{n}(x, y) \rightarrow 0 \text { a.e. in } \mathbb{R}^{N} \times \mathbb{R}^{N}
$$
On the other hand, a direct application of the mean value theorem yields
\begin{eqnarray}\label{ezq:0b.1}
\begin{aligned}
\left|\Upsilon_{n}(x, z)\right| & \leq\left|\left(\xi(|x+y-a_{r}|)\eta(x_{N}+y_{N}))-1\right)  \| \Upsilon_{\varphi}(x, y)\right|+|\varphi(z)|\left|\Upsilon_{1-\xi\eta}\left(x+y, z+y\right)\right| \\
& \leq\left|\Upsilon_{\varphi}(x, z)\right|+\frac{C|\varphi(z)|}{|x-z|^{\frac{N}{2}+s-1}} \chi_{B(z, 1)}(x)+\frac{2|\varphi(z)|}{|x-z|^{\frac{N}{2}+s}} \chi_{B^{c}(z, 1)}(x),
\end{aligned}
\end{eqnarray}
for almost every $(x, z) \in \mathbb{R}^{N} \times \mathbb{R}^{N} .$ Now, it is easily seen that the right hand side in (\ref{ezq:0b.1}) is $L^{2}$ -integrable. Thus, By the Lebesgue's dominated convergence theorem and i, it follows that
$$\|f_{y}-\varphi(x-y)\|=o(1),|y-a_{r}|\rightarrow \infty,\  and\  y_{N}\rightarrow +\infty$$

By i,we have $\|f_{y}-\varphi(x-y)\|_{L^{2}(\mathbb{R}^{N})}=o(1)$, $|y-a_{r}|\rightarrow \infty$, and $y_{N}\rightarrow +\infty$, or $y_{N} \rightarrow \infty$ and $\rho \rightarrow 0$;
\[\begin{array}{ll}
\ \|f_{y}-\varphi(x-y)\|^{2}
&= \int_{\mathbb{R}^{N}} \int_{\mathbb{R}^{N}} \frac{\left|\left(f_{y}-\varphi(\cdot-y)\right)(x)-\left(f_{y}-\varphi(\cdot-y)\right)(z)\right|^{2}}{|x-z|^{N+2 s}} d z d x \\[2mm]
&+\int_{\mathbb{R}^{N}}\left|f_{y}(x)-\varphi(x-y)\right|^{2} d x .
\end{array}
\]
Setting
$$
I_{1}:=\iint_{\mathbb{R}^{2 N}} \frac{(\xi(|x-a_{r}|)\eta(x_{N}))-\xi(|z-a_{r}|)\eta(z_{3}))|^{2}|\varphi(x-y)|^{2}}{|x-z|^{N+2 s}} d z d x
$$
and
$$
I_{2}:=\iint_{\mathbb{R}^{2 N}} \frac{\left.\left|\xi(|z-a_{r}|)\eta(z_{3}))-1\right|^{2} \mid \varphi(x-y)-\varphi(z-y)\right)\left.\right|^{2}}{|x-z|^{N+2 s}} d z d x
$$
the following inequality holds
$$
 \int_{\mathbb{R}^{N}} \int_{\mathbb{R}^{N}} \frac{\left|\left(f_{y}-\varphi(\cdot-y)\right)(x)-\left(f_{y}-\varphi(\cdot-y)\right)(z)\right|^{2}}{|x-z|^{N+2 s}} d z d x \leq I_{1}+I_{2}.
$$
Moreover, by definition of $\xi,$ we also have
$$
\xi(|z+y-a_{r}|)\eta(z_{3}+y_{N})-\left.1\right|^{2} \frac{|\varphi(x)-\varphi(z)|^{2}}{|x-z|^{N+2 s}} \leq 4 \frac{|\varphi(x)-\varphi(z)|^{2}}{|x-z|^{N+2 s}} \in L^{1}\left(\mathbb{R}^{N} \times \mathbb{R}^{N}\right)
$$
and
$$\xi(|z+y-a_{r}|)\eta(z_{3}+y_{N})-1|^{2} \frac{|\varphi(x)-\varphi(z)|^{2}}{|x-y|^{N+2 s}} \rightarrow 0 \text { a.e. in } \mathbb{R}^{N} \times \mathbb{R}^{N}$$
as $y_{N} \rightarrow \infty$ and $\rho \rightarrow 0$.
Hence, the Lebesgue's theorem ensures that
$$
I_{2}\rightarrow 0 \text { as } \rho \rightarrow 0.
$$
Now, by [\cite{m21}, Lemma 2.3], for every $y \in \mathbb{R}^{N}$, one has
$$
I_{1}=\iint_{\mathbb{R}^{2 N}} \frac{(\xi(|x-a_{r}|)\eta(x_{N}))-\xi(|z-a_{r}|)\eta(z_{3}))|^{2}|\varphi(x-y)|^{2}}{|x-z|^{N+2 s}} d z d x \rightarrow 0 \text { as } \rho \rightarrow 0
$$.
Therefore
$$
\|f_{y}-\varphi(x-y)\|=o(1),  y_{N} \rightarrow +\infty , \rho \rightarrow 0.
$$

\end{proof}

\begin{lemma}
The equality $M_{\infty}=M$ holds true. Hence, there is no $u \in X_{0}^{s}$ such that $\|u\|^{2}=M$ and $\|u\|_{L^{p}\left(\mathbb{R}^{N}\right)}=1$, and so, the minimization problem (\ref{eq:1w.sq6}) does not have solution.
\end{lemma}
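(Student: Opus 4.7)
The strategy is to prove $M = M_{\infty}$ by a double inequality, and then deduce the non-attainment by contradiction using the strong positivity of the limit minimizer.

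\textbf{Step 1: $M \geq M_{\infty}$.} This is the easy direction. Given any admissible $u \in X_0^s$ with $\int_{\Omega_r}|u|^p dx = 1$, extend it by zero to all of $\mathbb{R}^N$; since $u \in X_0^s$ already satisfies $u=0$ a.e.\ on $\mathbb{R}^N \setminus \Omega_r$, this extension lies in $H^s(\mathbb{R}^N)$ with $\|u\|_s^2 = \|u\|^2$ and $\int_{\mathbb{R}^N}|u|^p dx = 1$. Therefore $\|u\|^2 \geq M_{\infty}$, and taking the infimum gives $M \geq M_{\infty}$.

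\textbf{Step 2: $M \leq M_{\infty}$.} Here I would exploit the cut-off family $\Psi_y = c_y f_y$ introduced just before Lemma~\ref{lm:2.4}. By construction, $\operatorname{supp}(\xi(|\cdot - a_r|)\eta(\cdot_N))$ avoids both the ball $B_\rho(a_r)$ and the half-space $\{x_N \leq 0\}$, so $f_y$ (and hence $\Psi_y$) is supported in $\Omega_r$ and thus belongs to $X_0^s$. Moreover $\|\Psi_y\|_{L^p(\mathbb{R}^N)} = 1$ by definition of $c_y$, so $\Psi_y$ is admissible for the variational problem defining $M$ and $\|\Psi_y\|^2 \geq M$. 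Now let $|y - a_r| \to \infty$ and $y_N \to +\infty$: Lemma~\ref{lm:2.4}(1) gives $\|f_y\|_{L^p(\mathbb{R}^N)} \to \|\varphi\|_{L^p(\mathbb{R}^N)} = 1$, hence $c_y \to 1$, while Lemma~\ref{lm:2.4}(2) together with the translation invariance $\|\varphi(\cdot - y)\|_s = \|\varphi\|_s$ gives $\|f_y\|^2 \to M_{\infty}$. Combining these, $\|\Psi_y\|^2 = c_y^2 \|f_y\|^2 \to M_{\infty}$, which forces $M \leq M_{\infty}$.

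\textbf{Step 3: Non-attainment.} Suppose, for contradiction, that some $u \in X_0^s$ satisfies $\|u\|^2 = M$ and $\|u\|_{L^p(\mathbb{R}^N)} = 1$. Replacing $u$ with $|u|$ (which does not increase the Gagliardo seminorm and preserves the $L^p$-norm and the $L^2$-norm) we may assume $u \geq 0$. Extending by zero, $u$ becomes an admissible competitor for $M_{\infty}$, and by Step~1--2 it realizes the infimum $M_{\infty}$. Hence, after normalization, $u$ solves (up to a Lagrange multiplier) the limit equation $(-\Delta)^s u + u = \mu |u|^{p-2}u$ in $\mathbb{R}^N$. The strong maximum principle for $(-\Delta)^s + I$ then forces $u > 0$ a.e.\ in $\mathbb{R}^N$. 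This contradicts $u = 0$ a.e.\ on $\mathbb{R}^N \setminus \Omega_r$, a set of positive measure (it contains, for instance, the lower half space and the hole $B_\rho(a_r) \setminus \overline{\Omega_r}$). Therefore no minimizer exists in $X_0^s$.

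\textbf{Expected obstacle.} Steps 1 and 3 are short. The main technical point is Step~2: one must be sure that the cut-off $\xi(|\cdot - a_r|)\eta(\cdot_N)$ really does localize $\Psi_y$ inside $\Omega_r$ (so that $\Psi_y \in X_0^s$), and that both the convergence of the $L^p$-norms and of the Gagliardo seminorm supplied by Lemma~\ref{lm:2.4} hold under a \emph{joint} limit compatible with the geometry of $\Omega_r$ (translating $y$ far from $a_r$ while keeping $y_N \to +\infty$). Once these conditions are checked, the rest of the argument is routine.
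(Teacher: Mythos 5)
Your proposal is correct and follows essentially the same route as the paper's proof: the zero-extension argument giving $M\geq M_{\infty}$, the translated cut-off family $\Psi_{y}$ with $|y-a_{r}|\to\infty$, $y_{N}\to+\infty$ together with Lemma~\ref{lm:2.4} giving $M\leq M_{\infty}$, and the contradiction via the maximum principle (a nonnegative minimizer would solve the limit problem on all of $\mathbb{R}^{N}$, hence be strictly positive, contradicting $u=0$ on $\mathbb{R}^{N}\setminus\Omega_{r}$). No changes are needed.
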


\begin{proof}
The proof is similar to \cite{4}, and we only give a sketch here.  By Proposition 2.3 - part (i) it follows that
$$
M_{\infty} \leq M
$$
 Take a sequence $y^{n}$ in $\Omega_{r}$ such that\\
\[|y^{n}-a_{r}|\rightarrow \infty,and\  y_{N}^{n} \rightarrow +\infty\   as\  n\rightarrow \infty.\]
Then by lemma \ref{lm:2.4}, we have
\[\|f_{y^{n}}-\varphi(x-y^{n})\|_{L^{p}(R^{N})}=o(1),|y^{n}-a_{r}|\rightarrow \infty,and\  y^{n}_{3}\rightarrow +\infty,\]
\[\|f_{y^{n}}-\varphi(x-y^{n})\|_{s}=o(1),|y^{n}-a_{r}|\rightarrow \infty, and \ y^{n}_{3}\rightarrow +\infty,\]
\[c_{y^{n}}=\frac{1}{\left\|f_{y^{n}}\right\|_{L^{p}\left(\mathbb{R}^{N}\right)}} \rightarrow 1,|y^{n}-a_{r}|\rightarrow \infty,and\  y^{n}_{3}\rightarrow +\infty\]
Now, since $\varphi$ is a minimizer of $(\ref{eq:1.s6}),$ one has
$$
\left\|f_{y^{n}}\right\|_{s}^{2}=\left\|\varphi\left(\cdot-y_{n}\right)\right\|_{s}^{2}+o_{n}(1)=\|\varphi\|_{s}^{2}+o_{n}(1)=M_{\infty}+o_{n}(1)
$$
Similar arguments ensure that
$$
\left\|\Psi_{n}\right\|_{s}^{2}=\left\|\Psi_{n}\right\|^{2}=M_{\infty}+o_{n}(1)
$$
and$$\left\|\Psi_{n}\right\|_{L^{p}\left(\mathbb{R}^{N}\right)}=1$$
So\[ M \leq M_{\infty}\].
We then conclude that $M= M_{\infty}$.
Now, suppose by contradiction that there is $v_{0} \in X_{0}^{s}$ satisfying
$$
\left\|v_{0}\right\|=M \text { and }\left\|v_{0}\right\|_{L^{p}(\Omega)}=1
$$
Without loss of generality, we can assume that $v_{0} \geq 0$ in $\Omega$. Note that by $M= M_{\infty}$, since $v_{0} \in$ $H^{s}\left(\mathbb{R}^{N}\right)$ and $\left\|v_{0}\right\|=\left\|v_{0}\right\|_{s},$ it follows that $v_{0}$ is a minimizer for  $(\ref{eq:1.s6}),$ and so, a solution of roblem
\begin{eqnarray}\label{eq:0.v1}
\left\{\begin{aligned}
(-\Delta)^{s} u+u &=M_{\infty} u^{p-1} \text { in } \mathbb{R}^{N} \\
u & \in H^{s}\left(\mathbb{R}^{N}\right) .
\end{aligned}\right.
\end{eqnarray}
Therefore, by the maximum principle we get that $v_{0}>0$ in $\mathbb{R}^{N}$, which is impossible, because $v_{0}=0$ in $\mathbb{R}^{N} \backslash \Omega_{r}$. This completes the proof.
\end{proof}

\section{A Compactness lemma }
In this section we prove a compactness result involving the energy functional $I: X_{0}^{s} \rightarrow \mathbb{R}$ associated to the main problem (\ref{eq:0.1}) and given by
$$
I(u):=\frac{1}{2}\left(\iint_{\mathcal{Q}} \frac{|u(x)-u(y)|^{2}}{|x-y|^{N+2 s}} d y d x+\int_{\Omega_{r}}|u|^{2} d x\right)-\frac{1}{p} \int_{\Omega_{r}}|u|^{p} d x
$$
In order to do this, we consider the problem
\begin{eqnarray}\label{eq:0.x1}
\left\{\begin{aligned}
(-\Delta)^{s} u+u &=|u|^{p-2} u \text { in } \mathbb{R}^{N} \\
u & \in H^{s}\left(\mathbb{R}^{N}\right)
\end{aligned}\right.
\end{eqnarray}
whose energy functional $I_{\infty}: H^{s}\left(\mathbb{R}^{N}\right) \rightarrow \mathbb{R}$ has the form
$$
I_{\infty}(u):=\frac{1}{2}\left(\int_{\mathbb{R}^{N}} \int_{\mathbb{R}^{N}} \frac{|u(x)-u(y)|^{2}}{|x-y|^{N+2 s}} d y d x+\int_{\mathbb{R}^{N}}|u|^{2} d x\right)-\frac{1}{p} \int_{\mathbb{R}^{N}}|u|^{p} d x.
$$
With the above notations we are able to prove the following compactness result.
\begin{lemma}\label{eq:0cb.1}
Let $\left\{u_{n}\right\} \subset X_{0}^{s}$ be a sequence such that
\begin{eqnarray}\label{eq:0bx.1}
I\left(u_{n}\right) \rightarrow c \text { and } I^{\prime}\left(u_{n}\right) \rightarrow 0 \text { as } n \rightarrow \infty.
\end{eqnarray}
 Then there are a nonnegative integer $k, k$ sequences $\left\{y_{n}^{i}\right\}$ of points of the form $\left(x_{n}^{\prime}, m_{n}+1 / 2\right)$ for integers $m_{n}, i=1,2, \cdots, k, $ $u_{0} \in X_{0}^{s}$ solving equation (\ref{eq:0.1}) and nontrivial functions $u^{1}, \cdots, u^{k}$ in $H^{s}\left(\mathbb{R}^{N}\right)$ solving equation (\ref{eq:0.x1}). Moreover there is a subsequence $\left\{u_{n}\right\}$ satisfying

$$
\begin{array}{l}
\text { (1) }u_{n}(x)=u^{0}(x)+u^{1}\left(x-x_{n}^{1}\right)+\cdots+u^{k}\left(x-x_{n}^{k}\right)+o(1) strongly, where
x_{n}^{i}=y_{n}^{1}+\cdots+y_{n}^{i} \rightarrow \infty,\\ i=1,2, \cdots, k \\
\text { (2) }\left\|u_{n}\right\|^{2}=\left\|u^{0}\right\|_{\Omega_{r}}^{2}+\left\|u^{1}\right\|^{2}+\cdots+\left\|u^{k}\right\|^{2}+o(1) \\
\text { (3) } I\left(u_{n}\right)=I\left(u^{0}\right)+I_{\infty}\left(u^{1}\right)+\cdots+I_{\infty}\left(u^{k}\right)+o(1)
\end{array}
$$
If $u_{n} \geqslant 0$ for $n=1,2, \cdots,$ then $u^{1}, \cdots, u^{k}$ can be chosen as positive solutions, and $u^{0} \geqslant 0$
\end{lemma}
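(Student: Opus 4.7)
The plan is to prove the lemma by an iterative profile-decomposition argument in the spirit of Benci--Cerami and Struwe, adapted to the fractional setting and to the half-space-with-a-hole $\Omega_r$. First I establish boundedness of $\{u_n\}$ in $X_0^s$: the identity
\[
I(u_n)-\tfrac{1}{p}\langle I'(u_n),u_n\rangle = \bigl(\tfrac12-\tfrac1p\bigr)\|u_n\|^2
\]
combined with (\ref{eq:0bx.1}) yields $\|u_n\|\leq C$. Passing to a subsequence, $u_n\rightharpoonup u^0$ in $X_0^s$ and a.e.\ in $\mathbb{R}^N$. By the local compactness in Theorem 2.1, $\langle I'(u^0),\phi\rangle = 0$ for every $\phi\in C_0^\infty(\Omega_r)$, so $u^0\in X_0^s$ is a weak solution of (\ref{eq:0.1}).

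Next, set $v_n^1:=u_n-u^0$. The Brezis--Lieb splitting, applied both to the Gagliardo seminorm (as in \cite{b16,D21}) and to the $L^p$ norm, yields
\[
\|u_n\|^2 = \|u^0\|^2 + \|v_n^1\|^2 + o(1), \qquad I(u_n) = I(u^0) + I_\infty(v_n^1) + o(1), \qquad I'_\infty(v_n^1) \to 0,
\]
where passing from $I$ to $I_\infty$ on the residual is legitimate because $v_n^1\equiv 0$ outside $\Omega_r$. If $\|v_n^1\|_{L^p}\to 0$, then testing $I'_\infty(v_n^1)$ against $v_n^1$ forces $\|v_n^1\|\to 0$ and the decomposition terminates at $k=0$. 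Otherwise, by the fractional version of Lions' vanishing lemma there exist $\delta,R>0$ and points $\tilde y_n^1\in\mathbb{R}^N$ with $\int_{B_R(\tilde y_n^1)}|v_n^1|^p\geq \delta$. Rounding $\tilde y_n^1$ to the nearest point $y_n^1$ of the form $(x_n',m_n+\tfrac12)$ with $m_n\in\mathbb{Z}$ preserves this lower bound up to enlarging $R$. Then $w_n^1:=v_n^1(\cdot+y_n^1)\rightharpoonup u^1\neq 0$ in $H^s(\mathbb{R}^N)$, and weak passage to the limit in $I'_\infty(w_n^1)=0$ shows $u^1$ solves (\ref{eq:0.x1}). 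The fact that $v_n^1\rightharpoonup 0$ in $X_0^s$, together with the local compactness of Theorem 2.1, forces $|y_n^1|\to\infty$; otherwise the nontrivial weak limit $u^1$ would contradict the vanishing of $v_n^1$ on any fixed ball.

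Now iterate. Put $v_n^2:=v_n^1-u^1(\cdot-y_n^1)$ and repeat the splitting to obtain $u^2,y_n^2$, and so on; induction shows $|y_n^i|\to\infty$ and $|y_n^i-y_n^j|\to\infty$ for $j<i$, so the cumulative shifts $x_n^i=y_n^1+\cdots+y_n^i$ satisfy the separation required by (1). Every extracted $u^i$ is a nontrivial solution of (\ref{eq:0.x1}), hence
\[
I_\infty(u^i) = \bigl(\tfrac12-\tfrac1p\bigr)\|u^i\|^2 \geq \alpha > 0,
\]
with $\alpha$ bounded below by a positive multiple of $M_\infty^{p/(p-2)}$ (Corollary 1). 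Combined with the exact energy identity in (3) and the boundedness of $I(u_n)$, this forces termination after finitely many steps $k$, at which point the residual converges to $0$ strongly in $X_0^s$, giving (1), (2), and (3). If every $u_n\geq 0$, then weak and pointwise convergence preserve nonnegativity, so $u^0\geq 0$ and each $u^i\geq 0$; the strong maximum principle for $(-\Delta)^s$ promotes each nontrivial $u^i$ to a strictly positive function.

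The main obstacle is the nonlocal Brezis--Lieb-type splitting in $X_0^s$: unlike the local Dirichlet integral, the Gagliardo double integral does not decouple cleanly under the translations $x\mapsto x+y_n^i$, and one must bound cross terms by estimates very close to those already carried out in Lemma \ref{lm:2.4}. A secondary difficulty is justifying that the concentration centers can be taken with $N$-th coordinate in $\mathbb{Z}+\tfrac12$ while keeping the weak limit nontrivial and $u^i\in H^s(\mathbb{R}^N)$ (a routine lattice-rounding), and that the iterative residuals $v_n^i$ are genuinely Palais--Smale sequences for $I_\infty$ so that the same extraction mechanism can be reapplied at each step.
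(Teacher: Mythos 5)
The paper itself offers no proof of this lemma (it simply cites Alves--Molica Bisci--Torres Ledesma \cite{21} and Wang \cite{4}), so your proposal has to be measured against the standard splitting arguments in those references; your outline (boundedness via $I(u_n)-\frac1p\langle I'(u_n),u_n\rangle$, weak limit $u^0$ solving (\ref{eq:0.1}), Brezis--Lieb decoupling of the Gagliardo seminorm, Lions vanishing/dichotomy for the residual, energy quantization to terminate, preservation of nonnegativity) is indeed that argument, and the lattice-rounding of the $N$-th coordinate to $m_n+\frac12$ is a harmless fix of the statement's normalization.

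There is, however, one genuine gap, and it is precisely the point where this geometry differs from the exterior-domain case of \cite{21}. Having shown $|y_n^1|\to\infty$, you immediately assert that $w_n^1=v_n^1(\cdot+y_n^1)\rightharpoonup u^1\neq 0$ in $H^s(\mathbb{R}^N)$ and that $u^1$ solves the whole-space problem (\ref{eq:0.x1}). But $\Omega_r$ is contained in the half space $\mathbb{R}^N_+$, so $|y_n^1|\to\infty$ splits into two regimes: either the heights $(y_n^1)_N\to\infty$, in which case the translated complements $(\mathbb{R}^N\setminus\Omega_r)-y_n^1$ recede and your conclusion is correct; or $(y_n^1)_N$ stays bounded while the horizontal component escapes, in which case $w_n^1$ vanishes on translates of $\mathbb{R}^N\setminus\mathbb{R}^N_+$ that converge locally to a fixed half space, so the nontrivial limit $u^1$ is a nonnegative, nontrivial solution of the Dirichlet problem in a half space, \emph{not} of (\ref{eq:0.x1}). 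This alternative must be excluded, and the exclusion is not free: one invokes the nonexistence theorem for the fractional half-space problem (Chen--Li--Ma, quoted in the introduction of the paper), which makes such a limit impossible and forces $(y_n^1)_N\to\infty$; this is exactly the role played by the Esteban--Lions nonexistence result in Wang's local argument. Your proposal silently assumes the first regime, so as written the claim ``$u^1$ solves (\ref{eq:0.x1})'' does not follow; with the half-space alternative treated (and the same dichotomy repeated at every step of the iteration), the rest of your scheme goes through.
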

 \begin{proof}
 See \cite{21,4}.
 \end{proof}

{\bf COROLLARY 2} Let $\left\{u_{n}\right\} \subset M_{\Omega_{r}}$ satisfy $\|u_{n}\|_{\Omega_{r}}^{2}=c+o(1)$ and
$M<c<2^{(p-2) / p} M.$ Then $\left\{u_{n}\right\}$ contains a strongly convergent subsequence.
\begin{proof}
 See \cite{21,4}.
 \end{proof}

\renewcommand{\theequation}{\thesection.\arabic{equation}}
\section{ Proof of Theorem 1}
Set
$$
\chi(t)=\left\{\begin{array}{ll}
1 & \text { if } 0 \leqslant t \leqslant 1 \\
\frac{1}{t} & \text { if } 1 \leqslant t<\infty
\end{array}\right.
$$
and define $\beta: H^{s}\left(\mathbb{R}^{N}\right) \rightarrow \mathbb{R}^{N}$\cite{4} by
$$
\beta(u)=\int_{\mathbb{R}^{N}} u^{2}(x) \chi(|x|) x d x.
$$
For $r \geqslant r_{1}$, let
$$
\begin{array}{l}V_{r}=\left\{\left.u \in H_{0}^{1}(\Omega_{r}\right)|\int_{\Omega_{r}}|u|^{p}=1, \beta(u)=a_{r}\right\} \\ c_{r}=\inf _{u \in V_{r}}\|u\|_{\Omega_{r}}^{2}.\end{array}
$$
\begin{lemma}
$c_{r}>M$.
\end{lemma}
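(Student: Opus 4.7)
The plan is proof by contradiction. Since any $u \in V_r$ lies in $X_0^s$ with $\int_{\Omega_r}|u|^p = 1$, trivially $c_r \geq M$. Assume $c_r = M$ and pick a minimizing sequence $\{u_n\} \subset V_r$, so
$$\|u_n\|^2 \to M, \qquad \|u_n\|_{L^p(\Omega_r)}^p = 1, \qquad \beta(u_n) = a_r.$$

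Extending each $u_n$ by zero, the identity $\|u_n\|_s = \|u_n\|$ combined with Lemma 2.6 ($M = M_\infty$) shows that $\{u_n\}$ is a minimizing sequence for $M_\infty$ in $H^s(\mathbb{R}^N)$. Theorem 2.4 then provides translations $y_n \in \mathbb{R}^N$ such that, along a subsequence, $v_n(x) := u_n(x + y_n) \to v$ strongly in $H^s(\mathbb{R}^N)$, with $\|v\|_s^2 = M$ and $\|v\|_{L^p(\mathbb{R}^N)} = 1$; that is, $v$ is a positive ground state of the limit problem. I would next show $|y_n| \to \infty$: if a subsequence satisfied $y_n \to y$, then $u_n \to v(\cdot - y)$ strongly, and closedness of $X_0^s$ would put this limit in $X_0^s$ and realize $M$ as a minimum, contradicting the non-attainment statement of Lemma 2.6. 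Passing to a further subsequence I may assume $\hat{y}_n := y_n/|y_n| \to \hat{y} \in S^{N-1}$.

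The decisive calculation is $\lim_n \beta(u_n)$. After the change of variables $z = x - y_n$,
$$\beta(u_n) = \int_{\mathbb{R}^N} v_n^2(z)\, \chi(|z + y_n|)(z + y_n)\, dz.$$
The weight satisfies $|\chi(|x|) x| \leq 1$ pointwise, and for each fixed $z$,
$$\chi(|z + y_n|)(z + y_n) = \frac{z + y_n}{|z + y_n|} \longrightarrow \hat{y}$$
as $|y_n| \to \infty$. Since $v_n^2 \to v^2$ in $L^1(\mathbb{R}^N)$, a standard dominated-convergence estimate (splitting $\int |v_n^2 - v^2|\cdot 1$ and $\int v^2 |\chi(|z+y_n|)(z+y_n) - \hat{y}|$) yields
$$\beta(u_n) \longrightarrow \hat{y}\, \|v\|_{L^2(\mathbb{R}^N)}^2.$$
Because $\beta(u_n) \equiv a_r$, we would obtain $|a_r| = \|v\|_{L^2(\mathbb{R}^N)}^2$, a fixed constant determined by the ground state. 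Choosing $r_1$ so that $\sqrt{a^2 + r^2} > \|v\|_{L^2(\mathbb{R}^N)}^2$ for every $r \geq r_1$ produces the contradiction and establishes $c_r > M$.

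The main obstacle is the bounded-translation alternative: without ruling it out one cannot conclude that $\beta(u_n)$ escapes into the radial-unit-vector regime where the pointwise limit $\hat y$ appears. This is exactly where the non-attainment half of Lemma 2.6 is crucial, since it forbids the concentration profile $v$ from being supported inside $\Omega_r$.
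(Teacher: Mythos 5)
Your argument is correct in substance, but it reaches the contradiction by a noticeably different route than the paper. The paper rescales the minimizing sequence $v_m$ into a Palais--Smale sequence $u_m=M^{1/(p-2)}v_m$ for $I$ at the least nontrivial level $\left(\tfrac12-\tfrac1p\right)M^{p/(p-2)}$, rules out convergence by non-attainment, and then invokes the splitting result (Lemma 3.1) to write $u_m=\varphi(\cdot-x_m)+o(1)$ with $|x_m|\to\infty$; the contradiction is then extracted from a lower bound on $\langle\beta(u_m),x_m/|x_m|\rangle$ which is claimed to grow like $|x_m|^{N}$, hence to exceed the fixed quantity $M^{1/(p-2)}|a_r|$ for \emph{every} admissible $r$. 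You instead stay at the level of the constrained minimization: you feed the sequence directly into Theorem 2.4, use the non-attainment half of Lemma 2.6 only to force $|y_n|\to\infty$, and then compute the exact limit $\beta(u_n)\to\hat y\,\|v\|_{L^2(\mathbb{R}^N)}^2$, contradicting $\beta(u_n)\equiv a_r$ once $|a_r|=\sqrt{|a|^2+r^2}$ is large. This avoids the PS/splitting machinery altogether, and your endgame is actually the more robust one: since $|\chi(|x|)x|\le 1$, one always has $|\beta(u)|\le\|u\|_{L^2}^2$, so $\beta$ of a sequence with bounded $\|\cdot\|_s$ cannot blow up, and a bounded-limit comparison with $|a_r|$ is the natural contradiction (the paper's claimed growth $c_3|x_m|^{N+1}$ for $\int_{B_1(x_m)}\chi(|x|)|x||x_m|\,dx$ is not consistent with this cap, since $\chi(|x|)|x|\le 1$).

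Two small points you should make explicit. First, your contradiction only works for $r$ large, so the conclusion you actually prove is ``$c_r>M$ for $r\ge r_1$'' with $r_1$ suitably enlarged; this is all the paper needs (Theorem 1.1 takes $r\ge r_0$ large), but it is formally weaker than the unqualified statement, and the threshold must be chosen \emph{independently of the minimizing sequence}. That is easy: any limit $v$ produced by Theorem 2.4 satisfies $\|v\|_{L^2(\mathbb{R}^N)}^2\le\|v\|_s^2=M_\infty$, so it suffices to require $r_1>M_\infty$; say this rather than ``a fixed constant determined by the ground state'' (which otherwise needs the uniqueness of the minimizer). Second, when you rule out bounded translations, pass to a subsequence $y_n\to y$ and use continuity of translations in $H^s$ to get $u_n\to v(\cdot-y)$ strongly before invoking closedness of $X_0^s$; as written this is implicit but should be stated, since it is exactly the step where Lemma 2.6 (non-attainment of $M$) enters.
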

ProoF: It is easy to see that $c_{r} \geqslant M.$ Suppose $c_{r}=\alpha .$ Take a sequence $\left\{v_{m}\right\} \subset X_{0}^{s}$ s.t.
$$
\begin{array}{l}
\left\|v_{m}\right\|_{L_{p} \left(\Omega_{r}\right)}=1, \beta\left(v_{m}\right)=a_{r} \quad \text { for } \quad m=1,2, \cdots, \\
\left\|v_{m}\right\|^{2}=M+o(1).
\end{array}
$$
Let $u_{m}=M^{1 /(p-2)} v_{m}$ for $m=1,2, \cdots$. Then
$$I^{\prime}\left(u_{n}\right)=o_{n}(1) \text { in }\left(X_{0}^{s}\right)^{*}$$
and
$$I\left(u_{n}\right)=\left(\frac{1}{2}-\frac{1}{p}\right) M^{\frac{p}{p-2}}+o_{n}(1).$$
By the maximum principle, $\left\{u_{m}\right\}$ does not contain any convergent subsequence. $\mathrm{By}$ lemma (\ref{eq:0cb.1}) there is a sequence $\left\{x_{m}\right\}$ of the form $\left(x_{m}^{\prime}, m+\frac{1}{2}\right)$ for integers $m$ such that
$$
\begin{array}{c}
\left|x_{m}\right| \longrightarrow \infty \\
u_{m}(x)=\varphi\left(x-x_{m}\right)+o(1) \text { strongly. }
\end{array}
$$
Since $\varphi$ is radially symmetric, we may take $m$ to be positive.
Next, we consider the following sets
$$
\mathbb{R}_{+}^{N}:=\left\{x \in \mathbb{R}^{N}:\left\langle x, x_{m}\right\rangle>0\right\} \text { and }\mathbb{R}_{-}^{N}:=\mathbb{R}^{N} \backslash\mathbb{R}_{+}^{N}.
$$
 We may assume that
$$
\begin{aligned}
\left|x_{m}\right| \geqslant 4 \text { from } m=1,2, \cdots . \text { Now } & \\
\qquad\left\langle\beta\left(\varphi\left(x-x_{m}\right)\right), x_{m}\right\rangle=& \int_{\mathbb{R}^{N}} \varphi^{2}\left(x-x_{m}\right) \chi(|x|)\left\langle x, x_{m}\right\rangle d x \\
=& \int_{\mathbb{R}_{+}^{N}} \varphi^{2}\left(x-x_{m}\right) \chi(|x|)\left\langle x, x_{m}\right\rangle d x \\
&+\int_{\left(\mathbb{R}_{-}^{N}\right)} \varphi^{2}\left(x-x_{m}\right) \chi(|x|)\left\langle x, x_{m}\right\rangle d x \\
\geqslant & \int_{B_{1}\left(x_{m}\right)} \varphi^{2}\left(x-x_{m}\right) \chi(|x|)\left\langle x, x_{m}\right\rangle d x \\
&+\int_{\mathbb{R}_{-}^{N}} \varphi^{2}\left(x-x_{m}\right) \chi(|x|)\left\langle x, x_{m}\right\rangle d x.
\end{aligned}
$$
Note that there are $c_{1}>0, c_{2}>0$ such that for $x \in B_{1}\left(x_{m}\right),$ we have
$$
\begin{array}{l}
\varphi^{2}\left(x-x_{m}\right) \geqslant c_{1} \\
\quad\left\langle x, x_{m}\right\rangle \geqslant c_{2}|x|\left|x_{m}\right| \text { for } m=1,2, \cdots .
\end{array}
$$
Thus
$$
\begin{aligned}
\int_{B_{1}\left(x_{m}\right)} \varphi^{2}\left(x-x_{m}\right) \chi(|x|)\left\langle x, x_{m}\right\rangle d x & \geqslant c_{1} c_{2} \int_{B_{1}\left(x_{m}\right)} \chi(|x|)|x|\left|x_{m}\right| d x \\
& \geqslant c_{3}\left|x_{m}\right|^{N+1}, \quad c_{3}>0 \quad \text { a constant. }
\end{aligned}
$$
Recalling that for each $x\in\mathbb{R}_{-}^{N}$,
$$
\left|x-y_{n}\right| \geq|x|
$$
it follows that
$$
\left|u\left(x-y_{n}\right)\right|^{2} \chi(|x|)|x| \leq R|u(|x|)|^{2} \in L^{1}\left(\mathbb{R}^{N}\right)(R>0)
$$
(see \cite{21} lemma 4.3).

This fact, combined with the limit
$$
u\left(x-y_{n}\right) \rightarrow 0 \text { as }\left|y_{n}\right| \rightarrow+\infty
$$
implies that
$$
\int_{\mathbb{R}_{-}^{N}}\left|u\left(x-y_{n}\right)\right|^{2} \chi(|x|)|x| d x=o_{n}(1).
$$
We conclude that
$$
\begin{aligned}
M^{1 /(p-2)}\left|a_{r}\right| & \geqslant\left\langle\beta\left(u_{m}\right), \frac{x_{m}}{\left|x_{m}\right|}\right\rangle \\
&=\left\langle\beta\left(\varphi\left(x-x_{m}\right)\right), \frac{x_{m}}{\left|x_{m}\right|}\right\rangle+o(1) \\
& \geqslant c_{3}\left|x_{m}\right|^{N}+o(1)
\end{aligned}
$$
a contradiction. Thus $c_{r}>M$.

 {\bf REMARK 1}  By Lemma \ref{lm:2.4}(1), there is $r_{1}>0$ such that
$$
\frac{1}{2} \leqslant\left\|f_{y}\right\|_{L^{p}\left(\Omega_{r}\right)} \leqslant \frac{3}{2}
$$
where $r \geqslant r_{1}$ and $\left|y-a_{r}\right| \geqslant r / 2$ and $y_{N} \geqslant r / 2$.

 {\bf REMARK 2}. By Lemma \ref{lm:2.4}(2), there is $r_{2} \geqslant r_{1}$ such that
$M<\left\|\Psi_{y}\right\|^{2}<\frac{c_{r}+M}{2}$
where $r \geqslant r_{2}$ and $\left|y-a_{r}\right| \geqslant r / 2$ and $y_{N} \geqslant r / 2$.
\begin{lemma}\label{eq:1.f3}
There is $r_{3} \geqslant r_{2}$ such that if $r \geqslant r_{3},$ then
$$
\left\langle\beta\left(\varphi_{y}\right), y\right\rangle>0 \quad \text { for } \quad y \in \partial\left(B_{r / 2}\left(a_{r}\right)\right).
$$
\end{lemma}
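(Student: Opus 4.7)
The strategy is to mimic, for the test function $\Psi_y$ (which I take to be the $\varphi_y$ of the statement, since this is the object normalized in $L^p(\Omega_r)$ that has been built in Section~2), the barycenter computation carried out in the proof of Lemma 4.1. The key geometric observation is that for $y \in \partial B_{r/2}(a_r)$ with $a_r = (a,r)$, both $|y|$ and $y_N$ are of order $r$: $|y - a_r| = r/2$ forces $y_N \ge r - r/2 = r/2$, so $|y| \ge r/2$, while $|y| \le |a| + 3r/2$. Thus $|y|, y_N \to \infty$ as $r \to \infty$, and the conclusions of Lemma 2.4 (together with Remarks 1 and 2) apply uniformly along the sphere; in particular $c_y \to 1$.

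Splitting $\mathbb{R}^N$ into the half-spaces $\mathbb{R}^N_\pm = \{\pm\langle x, y\rangle > 0\}$, I would first bound the positive part from below by the contribution of $B_1(y) \subset \mathbb{R}^N_+$ (which holds once $|y| \ge 2$). For $x \in B_1(y)$ and $r$ large the cutoffs are trivial, since $|x - a_r| \ge r/2 - 1 \ge 2\rho$ gives $\xi(|x-a_r|) = 1$ and $x_N \ge r/2 - 1 \ge 1$ gives $\eta(x_N) = 1$, so $\Psi_y(x) = c_y\,\varphi(x-y)$. Setting $c_1 := \inf_{|z|\le 1} \varphi^2(z) > 0$ and using $\chi(|x|) = 1/|x|$, $|x| \le |y|+1$, $\langle x, y\rangle \ge |y|^2 - |y|$ on $B_1(y)$, one obtains
\[
\int_{B_1(y)} \Psi_y^2(x)\,\chi(|x|)\,\langle x, y\rangle\,dx \;\ge\; C_1\,|y|
\]
for some $C_1 > 0$ independent of $r$.

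For the negative piece I would change variables $z = x - y$. The preimage of $\mathbb{R}^N_-$ sits inside $\{|z| \ge |y|\}$, because $|z|^2 = |x|^2 - 2\langle x, y\rangle + |y|^2 \ge |y|^2$ whenever $\langle x, y\rangle \le 0$. Using $\Psi_y^2(z+y) \le c_y^2\,\varphi^2(z)$ and the elementary inequality $\chi(t)\,t \le 1$, which gives $\chi(|z+y|)\,|\langle z+y, y\rangle| \le |y|$, one finds
\[
\Bigl|\int_{\mathbb{R}^N_-} \Psi_y^2(x)\,\chi(|x|)\,\langle x, y\rangle\,dx\Bigr| \;\le\; c_y^2\,|y|\!\int_{|z|\ge |y|}\!\varphi^2(z)\,dz \;=\; o(|y|),
\]
since $\varphi \in L^2(\mathbb{R}^N)$ and $|y| \to \infty$ with $r$. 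Combining the two estimates yields $\langle \beta(\Psi_y), y\rangle \ge C_1|y| - o(|y|) > 0$ uniformly for $y \in \partial B_{r/2}(a_r)$ once $r \ge r_3$ is large enough.

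The main obstacle is the estimate on $\mathbb{R}^N_-$: the negative contribution must be truly subleading compared with $|y| \sim r$, and this is where the geometric inclusion $\{z + y \in \mathbb{R}^N_-\} \subset \{|z| \ge |y|\}$ combined with the $L^2$-integrability of $\varphi$ is decisive. The cutoffs $\xi,\eta$ play only a passive role: they act trivially on $B_1(y)$ when $r$ is large, and are pointwise $\le 1$ elsewhere, so they do not disturb either bound.
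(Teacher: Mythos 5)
Your proof is correct, and it follows the same overall strategy as the paper: pair the barycenter with $y$, split $\mathbb{R}^N$ into the half-spaces $\{\pm\langle x,y\rangle>0\}$, observe that on $\partial B_{r/2}(a_r)$ one has $y_N\ge r/2$ and $|y|\sim r$ so the cutoffs are inactive near $y$, bound the positive contribution from below by a region close to $y$, and show the contribution from $\{\langle x,y\rangle<0\}$ is negligible. The differences are in the details, and they are in your favor. The paper takes the lower bound over the large annulus $A((3/8)r,(5/8)r)$ and asserts $\varphi^2(x-y)\ge c_6$ there, obtaining $c_7r^{N+1}$; since $\varphi$ decays at infinity and the annulus contains points at distance $\sim r$ from $y$, that pointwise bound is not justified as written (it is an artifact of transplanting the argument from the local setting). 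Your choice of $B_1(y)$, where $\xi=\eta=1$, $\varphi^2(x-y)\ge\inf_{|z|\le1}\varphi^2>0$ and $\chi(|x|)\langle x,y\rangle\gtrsim|y|$, gives the more modest but fully rigorous lower bound $C_1|y|\sim r$, which is all that is needed. For the negative half-space the paper claims $o_n(1)$; your estimate $c_y^2|y|\int_{|z|\ge|y|}\varphi^2\,dz=o(|y|)$ is weaker but suffices against $C_1|y|$, and it only uses $\varphi\in L^2(\mathbb{R}^N)$ together with the inclusion $\{\langle x,y\rangle\le0\}\subset\{|x-y|\ge|y|\}$ and $\chi(t)t\le1$, rather than any decay rate of $\varphi$; it is also uniform over the sphere since $|y|\ge r/2$. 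The only standing assumptions are the continuity and positivity of the minimizer $\varphi$ near the origin and the bounds $2/3\le c_y\le2$, both of which the paper itself invokes (in Lemma 4.1 and via Lemma 2.4/Remark 1), so there is no gap relative to the paper's framework.
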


\begin{proof} By lemma \ref{lm:2.4}, we have $ 2 / 3 \leqslant c_{y} \leqslant 2 .$ For $r \geqslant r_{2},$ let
$$
\begin{aligned}
A_{\left((3 / 8) r_{1}(5 / 8) r\right)} &=\left\{x \in \mathbb{R}^{N}\left|\frac{3}{8} r \leqslant\right| x-a_{r} \mid \leqslant \frac{5}{8} r\right\}, \\
\mathbb{R}_{+}^{N}(y) &=\left\{x \in \mathbb{R}^{N} \mid\langle x, y\rangle>0\right\} \\
\mathbb{R}_{-}^{N}(y) &=\left\{x \in \mathbb{R}^{N} \mid\langle x, y\rangle<0\right\}
\end{aligned}
$$
$$\begin{aligned}
\left\langle\beta\left(\varphi_{y}\right), y\right\rangle=c_{y} &\left[\int_{\mathbb{R}_{+}^{N}(y)} \xi^{2}\left(\left|x-a_{r}\right|\right) \eta^{2}\left(x_{N}\right) \varphi^{2}(x-y) \chi(|x|)\langle x, y\rangle d x\right.\\
&\left.+\int_{\mathbb{R}_{-}^{N}(y)} \xi^{2}\left(\left|x-a_{r}\right|\right) \eta^{2}\left(x_{N}\right) \varphi^{2}(x-y) \chi(|x|)\langle x, y\rangle d x\right] \\
\geqslant \frac{2}{3} &\left[\int_{A((3 / 8) r,(5 / 8) r)} \varphi^{2}(x-y) \chi(|x|)\langle x, y\rangle\right.\\
&+\int_{\mathbb{R}_{-}^{N}(y)} \varphi^{2}(x-y) \chi(|x|)\langle x, y\rangle d x
\end{aligned}$$
$$\begin{aligned}
\int_{A((3 / 8) r,(5 / 8) r)} \varphi^{2}(x-y) \chi(|x|)\langle x, y\rangle d x & \geqslant c_{6} \int_{A((3 / 8) r,(5 / 8) r)} \chi(|x|)|x||y| d x \text { for } c_{6}>0 \\
& \geqslant c_{6}|y|\left[\left(\frac{5}{8} r\right)^{N}-\left(\frac{3}{8} r\right)^{N}\right] \\
& \geqslant c_{7} r^{N+1} \text { for } c_{7}>0 . \\
\int_{R_{-}^{N}(y)} \varphi^{2}(x-y) \chi(|x|)\langle x, y\rangle d x =o_{n}(1).
\end{aligned}$$
Therefore, there is $r_{3} \geqslant r_{2},$ such that if $r \geqslant r_{3},\left|y-a_{r}\right|=r / 2$
$$
\left\langle\beta\left(\Psi_{y}\right), y\right\rangle \geqslant c_{7} r^{N+1}-o_{n}(1)>0.
$$
This completes the proof.
 \end{proof}
By Lemma \ref{lm:2.4} and Lemma \ref{eq:1.f3}, fix $\rho_{0}>0, r_{0} \geqslant r_{3}$ such that if $0<\rho \leqslant \rho_{0}, r \geqslant r_{0}$ then $\left\|\varphi_{y}\right\|_{\Omega_{r}}^{2}<2^{(p-2) / p} \alpha$ for $y \in \overline{B_{r / 2}}\left(a_{r}\right) .$ From now on, fix $\rho_{0}, r_{0},$ for $r \geqslant r_{0} .$ Let
$$
\begin{array}{l}
B=\left\{\Psi_{y}|| y-a_{r} \mid \leqslant \frac{r}{2}\right\} \\
\Gamma=\left\{h \in C\left(V_{r}, V_{r}\right) \mid h(u)=u \quad \text { if } \quad\|u\|^{2}<\frac{c_{r}+\alpha}{2}\right\}.
\end{array}
$$
\begin{lemma}
$$h(B) \cap V_{r} \neq \emptyset \text { for each } h \in \Gamma.$$
\end{lemma}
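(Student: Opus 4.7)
The proof proceeds by a Brouwer degree argument on the finite-dimensional map
\[
\Phi : \overline{B_{r/2}(a_r)} \to \mathbb{R}^N, \qquad \Phi(y) := \beta(h(\Psi_y)).
\]
For each $y \in \overline{B_{r/2}(a_r)}$ one has $\Psi_y \in B$, so if I can produce $y^{*} \in B_{r/2}(a_r)$ with $\Phi(y^{*}) = a_r$, then $h(\Psi_{y^{*}}) \in V_r$ while $\Psi_{y^{*}} \in B$, which gives $h(B) \cap V_r \neq \emptyset$.

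First I would check that $h$ acts as the identity on $\{\Psi_y : y \in \partial B_{r/2}(a_r)\}$. For such $y$ we have $|y - a_r| = r/2$ and, since $a_r = (a, r)$, also $y_N \geq r/2$, so by Remark 2 (applied at the chosen $\rho_0, r_0$) one gets $\|\Psi_y\|^2 < (c_r + M)/2$, which is the threshold in the definition of $\Gamma$. Hence $h(\Psi_y) = \Psi_y$ on $\partial B_{r/2}(a_r)$, and $\Phi(y) = \beta(\Psi_y)$ there.

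Next I would construct the linear homotopy
\[
H(t, y) := (1-t)(y - a_r) + t\,(\Phi(y) - a_r), \qquad (t, y) \in [0,1] \times \overline{B_{r/2}(a_r)},
\]
and verify that $H(t, y) \neq 0$ on $\partial B_{r/2}(a_r)$ for every $t \in [0,1]$. By Step~1, on the boundary one has $H(t, y) = (1-t)(y - a_r) + t\,(\beta(\Psi_y) - a_r)$. Taking the inner product with $y - a_r$ and invoking a sharpened form of Lemma~\ref{eq:1.f3} — namely $\langle \beta(\Psi_y) - a_r,\, y - a_r\rangle \geq c\, r^{N+1} - o(1) > 0$, which is what the computation in the proof of Lemma~\ref{eq:1.f3} actually delivers, since $\Psi_y$ concentrates near $y$ and $|y - a_r| = r/2$ is large — one obtains
\[
\langle H(t, y),\, y - a_r\rangle \;\geq\; (1-t)\, |y - a_r|^2 + t\,\langle \beta(\Psi_y) - a_r,\, y - a_r\rangle \;>\; 0
\]
uniformly for $t \in [0,1]$ and $|y - a_r| = r/2$. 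Thus $H(t, y) \neq 0$ on the boundary.

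Finally, by homotopy invariance of Brouwer degree,
\[
\deg\bigl(\Phi - a_r,\, B_{r/2}(a_r),\, 0\bigr) \;=\; \deg\bigl(\mathrm{id} - a_r,\, B_{r/2}(a_r),\, 0\bigr) \;=\; 1,
\]
so $\Phi(y^{*}) = a_r$ admits a solution $y^{*} \in B_{r/2}(a_r)$, that is, $h(\Psi_{y^{*}}) \in V_r$, proving $h(B) \cap V_r \neq \emptyset$. The main obstacle is the boundary estimate used in the homotopy: Lemma~\ref{eq:1.f3} is stated only as $\langle \beta(\Psi_y), y\rangle > 0$, whereas the degree argument requires the version centered at $a_r$. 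This is where one must revisit the proof of Lemma~\ref{eq:1.f3} and observe that the leading term $c_7 r^{N+1}$ dominates the correction produced by replacing $y$ by $y - a_r$ (since $|a_r| \sim r$ while $|y - a_r| = r/2$); the remaining ingredients (continuity of $y \mapsto \Psi_y$ in $X_0^s$ and of $h$ and $\beta$) are routine.
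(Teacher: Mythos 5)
Your strategy is the same as the paper's: on $\partial B_{r/2}(a_r)$ Remark 2 forces $h(\Psi_y)=\Psi_y$, and one then computes a Brouwer degree of $y\mapsto\beta(h(\Psi_y))$ over $B_{r/2}(a_r)$ at the value $a_r$ by a linear homotopy to the identity. To your credit, you spotted the point the paper glosses over: the paper only checks $\langle F(t,x),x\rangle>0$, i.e.\ that the homotopy avoids $0$, whereas the degree it invokes is the degree at the target point $a_r$, which requires $F(t,x)\neq a_r$ on the boundary; so a recentred boundary estimate is indeed what is missing. The genuine gap in your proposal is that the estimate you substitute for it, $\langle\beta(\Psi_y)-a_r,\,y-a_r\rangle>0$ on $|y-a_r|=r/2$, does not follow from the computation in Lemma \ref{eq:1.f3} and is in fact false with the paper's definition of $\beta$. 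Since $|\chi(|x|)\,x|\le 1$ for all $x$, one has $|\beta(u)|\le\|u\|_{L^2(\mathbb{R}^N)}^2$, and $\|\Psi_y\|_{L^2}^2\le c_y^2\|\varphi\|_{L^2}^2$ is bounded uniformly in $y$ and $r$; hence $\langle\beta(\Psi_y),y-a_r\rangle=O(r)$, while the recentring term $-\langle a_r,y-a_r\rangle$ has size up to $|a_r|\,r/2\sim r^2/2$ and changes sign on the sphere. At the boundary point $y=a_r+\frac{r}{2}\frac{a_r}{|a_r|}$ the inner product is approximately $-\frac{r^2}{2}<0$, so the claimed ``sharpened form'' fails there. (The quantity $c_7r^{N+1}$ you borrow from Lemma \ref{eq:1.f3} cannot dominate anything here, because the same bound $|\beta(\Psi_y)|\le C$ forces $\langle\beta(\Psi_y),y\rangle\le C|y|\le Cr$.)

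Moreover this is not merely a failure of the inner-product test: at that same boundary point $\Phi(y)-a_r=\beta(\Psi_y)-a_r$ is approximately $-\bigl(|a_r|-O(1)\bigr)\frac{a_r}{|a_r|}$, which is positively opposite to $y-a_r=\frac{r}{2}\frac{a_r}{|a_r|}$, so your linear homotopy $H(t,y)$ actually vanishes on $\partial B_{r/2}(a_r)$ for some $t\in(0,1)$. Indeed, on the boundary $\Phi-a_r$ takes values in a bounded neighbourhood of $-a_r$, hence is admissibly homotopic to the constant map $-a_r$, giving $\deg\bigl(\Phi-a_r,B_{r/2}(a_r),0\bigr)=0$ rather than $1$. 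So the degree identity at the heart of your argument is not established (and cannot be, with $\beta$, $a_r$ and $V_r$ taken literally as defined in the paper); the defect you identified in the paper's own proof is real, but the proposed patch does not close it, and some genuinely different normalisation of the barycentre map (or of its target) is needed before either argument goes through.
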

 \begin{proof}  Let $h \in \Gamma$ and $H(x)=\beta \circ h \circ \varphi_{x}: \mathbb{R}^{N} \rightarrow \mathbb{R}^{N}$. Consider the homotopy,
for $0 \leqslant t \leqslant 1$
$$
F(t, x)=(1-t) H(x)+t I(x) \quad \text { for } \quad x \in \mathbb{R}^{N}.
$$
If $x \in \partial\left(B_{r / 2}\left(a_{r}\right)\right),$ then, by Remark 8 and Lemma 9,
$$
\begin{array}{c}
\left\langle\beta\left(\Psi_{x}\right), x\right\rangle>0 \\
\alpha<\left\|\Psi_{x}\right\|^{2}<\frac{c_{r}+\alpha}{2}.
\end{array}
$$
Then
$$
\begin{aligned}
\langle F(t, x), x\rangle &=\langle(1-t) H(x), x\rangle+\langle t x, x\rangle \\
&=(1-t)\left\langle\beta\left(\Psi_{x}\right), x\right\rangle+t\langle x, x\rangle \\
&>0.
\end{aligned}
$$
Thus $F(t, x) \neq 0$ for $x \in \partial\left(B_{r / 2}\left(a_{r}\right)\right) .$ By the homotopic invariance of the degree
$$
d\left(H(x), B_{r / 2}\left(a_{r}\right), a_{r}\right)=d\left(I, B_{r / 2}\left(a_{r}\right), a_{r}\right)=1.
$$
There is $x \in B_{r / 2}\left(a_{r}\right)$ such that
$$
a_{r}=H(x)=\beta\left(h \circ \Psi_{x}\right).
$$
Thus $h(B) \cap V_{r} \neq \emptyset$ for each $h \in \Gamma$.
Now we are in the position to prove Theorem A: Consider the class of mappings
$$
F=\left\{h \in C\left(\overline{B_{r / 2}\left(a_{r}\right)}\right), H^{1}\left(R_{N}\right):\left.h\right|_{\partial B_{r / 2}\left(a_{r}\right)}=\Psi_{y}\right\}
$$
and set
$$
c=\inf _{h \in F} \frac{\sup }{y \in B_{r / 2}\left(a_{r}\right)}\|h(y)\|_{\Omega_{r}}^{2}.
$$
It follows from the above Lemmas, with the appropriate choice of $r$ that
$$
\alpha<c_{r}=\inf _{u \in V_{\gamma}}\|u\|_{\Omega_{r}}^{2} \leqslant c<2^{(p-2) / p} \alpha
$$
and
$$
\max _{\partial B_{r / 2}\left(a_{r}\right)}\|h(y)\|_{\Omega_{r}}^{2}<\max _{B_{r / 2}\left(a_{r}\right)}\|h(y)\|_{\Omega_{r}}^{2}.
$$
Theorem 1,1 then follows by applying the version of the mountain pass theorem from Brezis-Nirenberg \cite{d6}.
 \end{proof}
\vskip 0.3in

\end{document}